 \numberwithin{dummy}{section}
\theoremstyle{plain}
 \numberwithin{equation}{section}
\newtheorem{theorem}{Theorem}[section]
\newtheorem{theorem*}{Theorem }
\newtheorem{proposition}{Proposition}[section]
\newtheorem{proposition*}{Proposition A\!\!}
\newtheorem{corollary*}{Corollary A\!\!}
\newtheorem{lemma}{Lemma}[section]
\newtheorem{cor}{Corollary}[section]
\DeclareSymbolFontAlphabet{\mathbb}{AMSb}
\DeclareSymbolFontAlphabet{\mathbbl}{bbold}
 \def\equationautorefname~#1\null{(#1)\null}
\DeclareMathOperator{\Det}{\mathrm{Det}}
\renewcommand{\det}{\mathbf{det}}
    \newcommand*{\qrr@gobblenexttocentry}[5]{}
    \newcommand*{\qrr@gobblenexttocentry}[4]{}
\newcommand*{\addsubsection}{%
    \addtocontents{toc}{\protect\qrr@gobblenexttocentry}%
    \subsection}
\date{ August 20th, 2020}
\newcommand{\set}[2]{ \left\{\,#1\; | \; #2\,\right\} }
\newcommand{\nc}{\newcommand}
\nc{\ep}{\varepsilon}
\nc{\iu}{i}
\nc{\integer}{\mathbb{Z}}
\nc{\real}{\mathbb{R}}
\nc{\complex}{\mathbb{C}}
\nc{\Cone}{\mathcal{C}}
 \DeclareMathOperator{\GL}{GL}
 \DeclareMathOperator{\Sym}{Sym}
 \DeclareMathOperator{\Sp}{Sp}
\begin{document}
\title{The compression semigroup of the dual Vinberg cone}
\author{Hideyuki Ishi}
\address{ H. Ishi\\ Osaka City University, 3-3-138 Sugimoto-cho, Sumiyoshi-ku, Osaka 558-8585, Japan}
\email{hideyuki@sci.osaka-cu.ac.jp}
\author{and Khalid Koufany }
 \address{ K. Koufany\\ Institut Elie Cartan de Lorraine, UMR CNRS 7502, University of Lorraine,  F-54506 Vandoeuvre-lès-Nancy, France}
\email{khalid.koufany@univ-lorraine.fr}

\keywords{ Dual Vinberg cone, Compression semigroup, triple and Ol'shanski\u{\i} polar decompositions }
\subjclass[2000]{Primary 20M20. Secondary 22E10.
}
\date{August 20, 2020}

 \maketitle
 
 {\centering\footnotesize Dedicated to the memory of Professor Takaaki Nomura.\par}
 
\begin{abstract} 
We investigate the semigroup associated to the dual Vinberg cone and prove its triple and Ol'shanski\u{\i} polar decompositions.
 Moreover, we show that the semigroup does not have the contraction property with respect to the canonical Riemannian metric on the cone.
\end{abstract}
 
%
%
%
%
 \tableofcontents
 
\section{Introduction and preliminaries}
Semigroups of transformations leaving invariant a given set is a well known tool in  various fields of mathematics, 
 for example invariant convex cone theory and geometric control theory.  
In Lie group setting, 
 probably the most important compression semigroups come from the Ol'shanski\u{\i} semigroups, 
 i.~e. compression semigroups of symmetric spaces $G_\mathbb{C}/G$, 
 where $G$ is a Hermitian Lie group. 
One extremely useful structure property of such semigroups is 
 the existence and uniqueness of the Ol'shanski\u{\i} polar decomposition $G\exp(iC)$,
 where $C$ is a convex cone in the Lie algebra of $G$ which is invariant under the adjoint action of $G$. 
This decomposition has many applications to representations theory, 
 see for example \cite{Olsh82, N, H-N}. 

A compression semigroup
 associated naturally to an Euclidean Jordan algebra $E$ was introduced in \cite{Kou95}. 
It is
 the compression semigroup of a symmetric cone $\Cone$ 
 (the open cone of invertible squares in $E$), 
 $\Gamma := \{g\in Co(E)\; |\; g(\Cone)\subset \Cone\}$, 
 where $Co(E)$ is the conformal group of $E$.
This semigroup $\Gamma$ satisfies the Ol'shanski\u{\i} polar decomposition
 and, in addition, $\Gamma$ admits a triple decomposition. 
Furthermore, elements of $\Gamma$ are proved to be contractions 
 for the invariant Riemannian metric on $\Cone$ \cite{Kou95, Kou02}
 and also for the Hilbert metric \cite{Kou06} and the Finsler metric \cite{Lim00}. 
The contraction property has many applications, 
 for example in Kalman Filtering theory (for the Hamiltonian semigroup) \cite{Boug93}.

The purpose of this article is to study 
 the compression semigroup of a homogeneous non-symmetric convex cone, 
 which gives a new example of Lie semigroup which admits 
 both the Ol'shanski\u{\i} polar decomposition and a triple decomposition,
 but does not have the contraction property with respect to the canonical metric.
More precisely,
the homogeneous cone $\Omega$ is given by
$$\Omega := \left\{x \in \real^5\; | \; x_1>0,\, x_2>0,\, x_1x_2x_3-x_1x_5^2-x_2x_4^2>0\right\},
$$
 which is called the {\em dual Vinberg cone}
 (\cite{Vin60, Vin63}).  
 
Let us first summarize some well known facts about the real symplectic group and the symplectic semigroup,
 which will be utilized frequently in the investigation of the cone $\Omega$.
Let $\mathrm{Sym}(3,\mathbb{R})$ denote the space of $3\times 3$ real symmetric matrices,
 and  $\mathrm{Sym}^{+}(3,\mathbb{R})$ (resp. $\mathrm{Sym}^{++}(3,\mathbb{R})$)
 the subset of positive (resp. positive definite) matrices.
Then $\mathrm{Sym}^{++}(3,\mathbb{R})$ is a symmetric cone 
 in the Euclidean Jordan algebra $\mathrm{Sym}(3,\mathbb{R})$ 
 with the inner product given by
 $(x|y) = \mathrm{tr}\,xy$. 
Denote by $\Delta_1, \Delta_2$ and $\Delta_3$ 
 the principal minors of matrices in $\mathrm{Sym}(3,\mathbb{R})$.
 For a matrix $M$, denote by $M^T$ its transpose, 
 and if $M$ is invertible, $M^{-T}$ will denote $(M^T)^{-1}$.
 
Recall the symplectic group $\Sp(6,\mathbb{R})=\{g\in \GL(6,\mathbb{R}) \; | \; gJg^T=J\}$ with $J=\begin{pmatrix} 0 & -I\\ I& 0\end{pmatrix}$. 
In a block form,  an element 
 $g=\begin{pmatrix} A & B \\ C & D \end{pmatrix}\in \GL(6,\mathbb{R})$ 
 with $A, B, C, D \in \mathrm{Mat}(3, \real)$
 belongs to  $\Sp(6,\mathbb{R})$ if and only if
\begin{equation} \label{eqn:Sp6}
\begin{aligned}
 & A^TC\, , D^TB \in \Sym(3,\mathbb{R}),\\
  &D^TA - B^TC = I,
\end{aligned}
\end{equation}
or equivalently
\begin{equation} \label{eqn:Sp6-2}
\begin{aligned}
 & B A^T, C D^T \in \Sym(3,\mathbb{R}),\\
 &A D^T - B C^T = I.
\end{aligned}
\end{equation}
\begin{lemma}\label{lemma:HCdecomp_Sp6}
An element 
 $g=\begin{pmatrix} A&B\\ C&D\end{pmatrix}\in Sp(6,\mathbb{R})$ 
 has a unique triple decomposition
\begin{equation}\label{eqn:HC_Sp6}
 \begin{pmatrix} A&B\\ C&D\end{pmatrix}
 = \begin{pmatrix} I&v\\ 0&I\end{pmatrix} 
 \begin{pmatrix} L&0\\ 0&L^{-T}\end{pmatrix} 
 \begin{pmatrix} I&0\\ u& I \end{pmatrix}
\end{equation}
 with $u, v\in \Sym(3,\mathbb{R})$ and $L\in GL(3,\mathbb{R})$ 
 if and only if $D$ is invertible, and in this case
 \begin{equation}\label{eqn:Luv}
 L=D^{-T}=A-BD^{-1}C, \, v=BD^{-1} \text{ and }\, u=D^{-1}C.
 \end{equation}
 \end{lemma}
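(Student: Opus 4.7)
The plan is to prove both directions and uniqueness simultaneously by simply multiplying out the right-hand side of \eqref{eqn:HC_Sp6} and reading off block entries. Expanding
\begin{equation*}
\begin{pmatrix} I & v \\ 0 & I\end{pmatrix}\begin{pmatrix} L & 0 \\ 0 & L^{-T}\end{pmatrix}\begin{pmatrix} I & 0 \\ u & I\end{pmatrix}
= \begin{pmatrix} L + vL^{-T}u & vL^{-T} \\ L^{-T}u & L^{-T}\end{pmatrix},
\end{equation*}
the lower-right block shows $D = L^{-T}$, so $D$ must be invertible and $L = D^{-T}$ is forced. The off-diagonal blocks then give $B = vD$ and $C = Du$, hence $v = BD^{-1}$ and $u = D^{-1}C$, so $L, u, v$ are uniquely determined by $g$, settling uniqueness and the formulas in \eqref{eqn:Luv}. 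The remaining upper-left block reads $A = D^{-T} + BD^{-1}C$, i.e.\ $L = A - BD^{-1}C$, which is the second equality of \eqref{eqn:Luv}.

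For the converse, assume $D$ is invertible and \emph{define} $L := D^{-T}$, $v := BD^{-1}$, $u := D^{-1}C$. Two things remain to be checked: that $u, v \in \Sym(3,\real)$, and that the identity $A = L + vL^{-T}u$ holds, since the other three block identities are automatic from the definitions. Symmetry of $v$ follows from $D^T B \in \Sym(3,\real)$ in \eqref{eqn:Sp6}: this relation reads $D^T B = B^T D$, and conjugating by $D^{-1}$ on the right and $D^{-T}$ on the left gives $BD^{-1} = D^{-T}B^T = (BD^{-1})^T$. Symmetry of $u$ is analogous, using $CD^T = DC^T$ from \eqref{eqn:Sp6-2}. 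Finally, to verify $A - BD^{-1}C = D^{-T}$, multiply on the right by $D^T$; the left side becomes $AD^T - BD^{-1}(CD^T) = AD^T - BD^{-1}(DC^T) = AD^T - BC^T$, and by the second relation of \eqref{eqn:Sp6-2} this equals $I = D^{-T}D^T$, as desired.

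The argument is essentially one block multiplication together with two symmetry checks, so there is no real obstacle. The only point requiring mild care is selecting the correct form of the symplectic relations at each step: symmetry of $v$ calls for the conditions \eqref{eqn:Sp6} involving $D^T B$, while symmetry of $u$ and the reconstruction of $A$ both use \eqref{eqn:Sp6-2} via $CD^T = DC^T$ and $AD^T - BC^T = I$. Once the right relations are brought to bear, every verification collapses to a one-line calculation.
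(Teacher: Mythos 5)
Your proof is correct and complete: the block multiplication, the extraction of $L, u, v$, the symmetry checks via $D^TB \in \Sym(3,\real)$ and $CD^T \in \Sym(3,\real)$, and the verification $A - BD^{-1}C = D^{-T}$ from $AD^T - BC^T = I$ are all accurate. The paper states this lemma without proof (it is a standard fact about the Harish-Chandra/Gauss decomposition in $\Sp(6,\real)$), and your argument is exactly the expected one.
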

 
It is well known that the symplectic group $\Sp(6,\mathbb{R})$ acts on   
 the Siegel upper half space 
 $T_{\Sym^{++}(3,\mathbb{R})} := \Sym(3,\mathbb{R})+i\Sym^{++}(3,\mathbb{R})$  
 by linear fractional transformations,
 that is,
$$
g\cdot z= (Az+B)(Cz+D)^{-1},\;\; \text{ where }\; g=\begin{pmatrix} A & B \\ C & D \end{pmatrix}\in \Sp(6,\mathbb{R}), z\in T_{\Sym^{{++}}(3,\mathbb{R})}$$
which induces an isomorphism from $\Sp(6,\mathbb{R})/\{\pm I\}$ onto the holomorphic automorphism group $G(T_{\Sym^{{++}}(3,\mathbb{R})})$ of $T_{\Sym^{{++}}(3,\mathbb{R})}$.
Since $\mathrm{Sym}(3, \real)$ is the \v{S}ilov boundary of $T_{\Sym^{++}(3, \mathbb{R})}$,
 the action of $\Sp(6, \real)$ is extended on $\mathrm{Sym}(3, \real)$
(precisely, one should consider a conformal compacitification of $\mathrm{Sym}(3, \real)$
 on which the actions of all the elements $g \in \Sp(6, \real)$ are well-defined \cite{Kan91}). 
%
In this action 
 we consider the compression semigroup  (called also the symplectic semigroup) 
 of the symmetric cone $\Sym^{++}(3,\mathbb{R})$,
\begin{equation*}
\Gamma_{\Sp}
 := \left\{g\in \Sp(6,\mathbb{R})\; |\; 
 g\cdot \Sym^{++}(3,\mathbb{R})\subset \Sym^{++}(3,\mathbb{R})\right\}
\end{equation*}
which is a closed subsemigroup of $\Sp(6,\mathbb{R})$.

It was proved in \cite{Kou95} that $\Gamma_{\Sp}$ can be given by
\begin{equation}\label{symp-semi}
\Gamma_{\Sp}=\left\{\begin{pmatrix}A&B\\ C&D \end{pmatrix}\in \Sp(6,\mathbb{R})\; | \; 
 D \in \GL(3,\mathbb{R}), \; CD^T, D^TB\in \Sym^{+}(3,\mathbb{R}) \right\}
 \end{equation}
and has a triple decomposition
$\Gamma_{\Sp}=\Gamma_{\Sp}^+  G(3,\mathbb{R}) \Gamma_{\Sp}^-$,
where
\begin{eqnarray*}
\Gamma_{\Sp}^+& := &\left\{\begin{pmatrix}I&B\\ 0&I \end{pmatrix} \; | \;     B\in \Sym^{++}(3,\mathbb{R}) \right\},\\
  G(3, \real) & := &
 \set{\begin{pmatrix} A & 0 \\ 0 & A^{-T} \end{pmatrix} }
           {A \in \mathrm{GL}(3,\real) },\\
\Gamma_{\Sp}^-& := &\left\{\begin{pmatrix}I&0\\ C&I \end{pmatrix} \; | \;     C\in \Sym^{++}(3,\mathbb{R}) \right\}.
\end{eqnarray*}
It was also proved that the symplectic semigroup satisfies
 the following Ol'shanski\u{\i} polar decomposition 
 $\Gamma_{\Sp}=G(3,\mathbb{R})\exp(C_{\Sp})$,
 where $C_{\Sp}$ is the closed convex cone
$$C_{\Sp} := \left\{\begin{pmatrix}0 & B\\ C& 0\end{pmatrix}\in \mathfrak{sp}(6,\mathbb{R}) \; |\; B, C\in\Sym^+(3,\mathbb{C})\right\},$$
 and
 $\mathfrak{sp}(6,\mathbb{R})$ is the Lie algebra of $\Sp(6,\mathbb{R})$,
 that is,
\begin{equation*} 
\begin{aligned}
\mathfrak{sp}(6,\mathbb{R})&=\{X\in M(3,\mathbb{R}) \; |\;  XJ+JX^T=0\}\\
&= \left\{ \begin{pmatrix} A & B \\ C& -A^T  \end{pmatrix}\; |\; 
  A \in \mathrm{Mat}(3, \mathbb{R}),\,\,B, C\in  \Sym(3,\mathbb{R}) \right\}. \\
\end{aligned}
\end{equation*}


Now we turn to 
 the dual Vinberg cone $\Omega$.
Let $V$ be the subspace of $\mathrm{Sym}(3,\mathbb{R})$ defined by
$$
 V :=
 \left\{x=\begin{pmatrix} x_1 & 0 & x_4 \\ 0 & x_2 & x_5 \\ x_4 & x_5 & x_3 \end{pmatrix},\; 
        x_1, \dots, x_5 \in \mathbb{R} \right\}.
$$
Then 
 $\Omega$ is naturally identified with
 the intersection $\mathrm{Sym}^{++}(3,\mathbb{R}) \cap V$,
 that is, 
$$
\begin{array}{rl}
 \Omega &= \{x \in V \; |\; \Delta_1(x)>0, \Delta_2(x)>0, \Delta_3(x)>0 \}\\
 &= \{x \in V \; |\; x \mbox{ is positive definite} \}.
 \end{array}
$$
Let $T_\Omega := V+i\Omega\subset V_\mathbb{C}$ 
 be the tube domain over $\Omega$,
$G(T_{\Omega})$  
 the identity component of the holomorphic automorphism group
 on the tube domain $T_{\Omega}$, 
 and $\Gamma$ the compression semigroup  
\begin{equation} \label{eqn:def_of_Gamma}
 \Gamma := \left\{g \in G(T_{\Omega})\; |\;  g\cdot\Omega \subset \Omega\right\}
\end{equation}
  of $\Omega$.
 This semigroup $\Gamma$ is a main object of the present work.

 Here we explain the organization of this paper.
In Section 2,
 we describe the group $G(T_{\Omega})$ as a subgroup of $\Sp(6, \real)$.
Then we give a characterization of $\Gamma$ as a subset of $G(T_{\Omega})$
 using the triple decomposition in Section 3.
In Section 4, we show that $\Gamma$ also admits an Ol'shanski\u{\i} polar decomposition.
Finally, in Section 5, we show that $\Gamma$ does not have a contraction property 
 with respect to the canonical Riemannian metric on $\Omega$.

%
%
%
%
\section{The holomorphic automorphism group of $T_\Omega$}
First we shall determine the linear automorphism group 
$$G(\Omega) :=\{g\in GL(V) \; | \; g\Omega = \Omega\}$$
 of the cone $\Omega$.
Define
 $$ 
 H := \left\{A = \begin{pmatrix} a_1 & & \\ 0 & a_2 & \\ a_4 & a_5 & a_3 \end{pmatrix}\; |\; a_1, \dots, a_5 \in \mathbb{R},\,\,a_1 a_2 \ne 0,\,a_3>0\right\}, 
$$
 and let $H^+$ be the subset of $H$ consisting of   diagonal matrices with positive entries.    
Then $H$ forms a Lie group,
 and $H^+$ is its identity component.
Let $\rho : H \to GL(V)$ be the representation of $H$
 given by $\rho(A)x := A x A^T\,\,\,(A \in H,\,x \in V)$.
Then $H^+$ as well as $H$ acts transitively on the cone $\Omega \subset V$ by $\rho$.
In other words, we have $\rho(H) \subset G(\Omega)$
 and $\Omega = \rho(H^+) I_3$.
For a parameter $(s_1, s_2, s_3) \in \mathbb{C}^3$,
 let $\Delta_{(s_1,s_2, s_3)}$ be the function on $\Omega$ given by
\begin{align*}
 \Delta_{(s_1,s_2, s_3)}(x) &:= \Delta_1(x)^{s_1 - s_2} \Delta_2(x)^{s_2 - s_3} \Delta_3(x)^{s_3}\\
 &= x_1^{s_1-s_3} x_2^{s_2 - s_3} (\det x)^{s_3} 
\quad (x \in \Omega). 
\end{align*}
The function $\Delta_{(s_1,s_2, s_3)}$ is relatively invariant under the action of $H^+$: 
\begin{equation} \label{eqn:Det-invariance}
 \Delta_{(s_1,s_2, s_3)}(\rho(A)x) = a_1^{2s_1} a_2^{2s_2} a_3^{2 s_3} \Delta_{(s_1,s_2, s_3)}(x)
 \quad (x \in \Omega,\, A \in H^+).
\end{equation}
Indeed, this equality characterizes the function $\Delta_{(s_1,s_2, s_3)}$ up to a constant multiple.

Let $\Omega^* \subset V$ be the dual cone of $\Omega$.
Namely,
 $\Omega^* := \left\{\xi \in V\; |\; (x|\xi) >0,\,  \forall x \in \overline{\Omega} \setminus \{0\}\right\}$.
The so-called {\em K\"ocher-Vinberg characteristic function} $\varphi_{\Omega}$ of $\Omega$ is defined by 
$\varphi_{\Omega}(x) := \int_{\Omega^*} e^{-  (x|\xi) }\,d\xi$ for $x \in \Omega$.
It is known (see \cite[Proposition I.3.1]{F-K}) that, for any $g \in G(\Omega)$,
 we have 
 
\begin{equation} \label{eqn:phi-invariance}
 \varphi_{\Omega}(gx) = |\Det\, g|^{-1} \varphi_{\Omega}(x) \quad (x \in \Omega). 
\end{equation}
For $A = \mathrm{diag}(a_1, a_2, a_3) \in H$ and $x \in V$,
 we observe that
$$ 
\rho(A)x 
 = \begin{pmatrix} a_1^2 x_1 & 0 & a_1 a_3 x_4 \\ 
 0 & a_2^2 x_2 & a_2 a_3 x_5 \\ 
 a_1 a_3 x_4 & a_2 a_3 x_5 & a_3^2 x_3 \end{pmatrix},$$
 so that $\Det\, \rho(A) = a_1^3 a_2^3 a_3^4$.
For a general $A \in H^+$, because of the factorization $A = \mathrm{diag}(a_1, a_2, a_3) A'$
 with a unipotent $A' \in H^+$,
 we have again $\Det\, \rho(A) = a_1^3 a_2^3 a_3^4$.
Therefore,
 comparing \autoref{eqn:Det-invariance} and \autoref{eqn:phi-invariance}, 
 we conclude that
 there exists a constant $C >0$ for which
\begin{equation} \label{eqn:phi-det}
 \varphi_{\Omega}(x) = C \Delta_{(-3/2, -3/2, -2)}(x) 
 = C x_1^{1/2} x_2^{1/2} (\det x)^{-2}.
\end{equation} 

Let $G(\Omega)_{I_3}$ be the isotropy subgroup of $G(\Omega)$ at $I_3 \in \Omega$,
 and take $g \in G(\Omega)_{I_3}$.
In general, for a function $F$ on $\Omega$, we denote by $g^*F$ the pullback $F \circ g$.
Since $G(\Omega)_{I_3}$ is a compact group, 
 we have $|\Det\,g| = 1$, 
 so that 
 $g^* \varphi_{\Omega}^2 = \varphi_{\Omega}^2$
 thanks to \autoref{eqn:phi-invariance}.
Thus, by the uniqueness of irreducible factorization of the rational function $\varphi_{\Omega}^2$,
 we have
\begin{equation} \label{eqn:case_I}
 g^*x_1 = C_1 x_1, \quad
   g^*x_2 = C_2 x_2, \quad
   g^*\det x = C_3 \det x
\end{equation}
 or
\begin{equation} \label{eqn:case_II}
 g^*x_1 = C_1 x_2, \quad
   g^*x_2 = C_2 x_1, \quad
   g^*\det x = C_3 \det x
\end{equation}
with $C_1 C_2 C_3 = 1$. 
On the other hand, since $g\cdot I_3 = I_3$, we have $C_1 = C_2 = C_3 = 1$.
Let us consider the case \autoref{eqn:case_I}.
We have $g^* \det x = \det x$, which means that
\begin{equation} \label{eqn:comparison-det}
 x_1 x_2 (g^* x_3) - x_1 (g^* x_5)^2 - x_2 (g^* x_4)^2 = x_1 x_2 x_3 - x_1 x_5^2 - x_2 x_4^2. 
\end{equation}
From this equality,
 we deduce $g^* x_5 = \pm x_5 + \alpha x_2$ with some $\alpha \in \mathbb{R}$.
In fact, if $g^*x_5$ would contains other terms, for instance $\gamma x_3$, 
 then the left-hand side should contain the term of $x_1 x_3 x_5$,
 which does not appear in the right-hand side.
By the same argument, we have $g^* x_4 = \pm x_4 + \beta x_1$ with some $\beta \in \mathbb{R}$.
Actually, we have \autoref{eqn:comparison-det} in this case with
$$ g^* x_3 = x_3 + \beta^2 x_1 + \alpha^2 x_2 \pm 2 \beta x_4 \pm 2 \alpha x_5.$$
On the other hand, since $g\cdot I_3 = I_3$, we have $\alpha = \beta = 0$. 
Therefore we conclude that $g = \rho(\mathrm{diag}(\pm 1, \pm 1, 1 ))$.

Let us turn to the case \autoref{eqn:case_II}. 
Put $\sigma = \begin{pmatrix} 0 & 1 & 0 \\ 1 & 0 & 0 \\ 0 & 0 & 1 \end{pmatrix}$.  
Then $\rho(\sigma) : x = (x_1, \dots, x_5) \mapsto (x_2, x_1, x_3, x_5, x_4)$
 belongs to $G(\Omega)_{I_3}$
 satisfying \autoref{eqn:case_II}.
Furthermore, if $g \in G(\Omega)_{I_3}$ satisfies \autoref{eqn:case_II},
 then $g \circ \rho(\sigma)$ satisfies \autoref{eqn:case_I}.
Now we conclude that
\begin{lemma}
The isotropy subgroup $G(\Omega)_{I_3}$ is a finite group of order 8 generated by
$\rho(\mathrm{diag}(-1,1,1))$, $\rho(\mathrm{diag}(1,-1,1))$, and $\rho(\sigma)$.
\end{lemma}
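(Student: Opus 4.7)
The plan is to collect the analysis already carried out in the paragraphs immediately preceding the statement, which has reduced the determination of $G(\Omega)_{I_3}$ to the two scenarios encoded by \autoref{eqn:case_I} and \autoref{eqn:case_II}.

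First, I would observe that the case \autoref{eqn:case_I} calculation shown above is already exhaustive: every such $g$ is of the form $\rho(\mathrm{diag}(\epsilon_1, \epsilon_2, 1))$ with $\epsilon_i \in \{\pm 1\}$ once the translation parameters $\alpha, \beta$ are forced to vanish by the condition $g \cdot I_3 = I_3$. This produces exactly four elements, which are precisely the elements of the subgroup generated by $\rho(\mathrm{diag}(-1,1,1))$ and $\rho(\mathrm{diag}(1,-1,1))$.

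Second, for case \autoref{eqn:case_II}, I would invoke the observation above that $g \circ \rho(\sigma)$ always satisfies \autoref{eqn:case_I}, which means every case \autoref{eqn:case_II} element has the form $\rho(\mathrm{diag}(\epsilon_1, \epsilon_2, 1)) \cdot \rho(\sigma)$ for some $\epsilon_i \in \{\pm 1\}$. This contributes four additional elements, disjoint from those in case \autoref{eqn:case_I} since any such element interchanges the coordinates $x_1$ and $x_2$.

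Finally, I would verify directly that the three named generators lie in $G(\Omega)_{I_3}$: the two diagonal matrices fix $I_3$ trivially and preserve $\Omega$ since the defining inequalities $x_1>0$, $x_2>0$, $\det x>0$ depend only on quantities invariant under sign flips of $x_4, x_5$; and $\rho(\sigma)$ fixes $I_3$ because $\sigma \sigma^T = I_3$, while the induced coordinate map $(x_1,\dots,x_5) \mapsto (x_2,x_1,x_3,x_5,x_4)$ manifestly preserves the defining inequalities. Counting gives $4+4 = 8$ elements, matching the claim. There is no substantial obstacle remaining: the serious work, namely the irreducible factorization argument for $\varphi_{\Omega}^2$ and the monomial analysis determining $g^*x_4$ and $g^*x_5$, has already been carried out above the lemma, and what remains is bookkeeping and verification that the enumeration is complete.
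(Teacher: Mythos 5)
Your proposal is correct and follows essentially the same route as the paper: indeed, the paper gives no separate proof environment for this lemma, and the preceding paragraphs (the factorization of $\varphi_{\Omega}^2$, the two cases, the monomial analysis forcing $g^*x_4 = \pm x_4 + \beta x_1$ and $g^*x_5 = \pm x_5 + \alpha x_2$, the normalization $\alpha=\beta=0$ from $g\cdot I_3 = I_3$, and the reduction of case \autoref{eqn:case_II} to case \autoref{eqn:case_I} via $\rho(\sigma)$) constitute exactly the argument you reassemble. Your enumeration $4+4=8$ and the verification that the three generators lie in $G(\Omega)_{I_3}$ is the intended bookkeeping.
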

\begin{cor}
One has $G(\Omega) = \rho(H^+) \rtimes G(\Omega)_{I_3}$.
\end{cor}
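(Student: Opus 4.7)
My plan is to exploit the fact, already noted in the excerpt, that $\Omega = \rho(H^+)\cdot I_3$, so $\rho(H^+)$ acts transitively on $\Omega$. Given any $g \in G(\Omega)$, the point $g\cdot I_3$ lies in $\Omega$, hence $g\cdot I_3 = \rho(h)\cdot I_3$ for some $h \in H^+$, and then $k := \rho(h)^{-1} g$ fixes $I_3$, i.e.\ $k \in G(\Omega)_{I_3}$. This immediately yields the factorization $g = \rho(h)\,k$ and therefore the set-theoretic equality $G(\Omega) = \rho(H^+)\, G(\Omega)_{I_3}$.

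Next I would verify that the intersection is trivial: if $\rho(h) \in G(\Omega)_{I_3}$ with $h \in H^+$, then $h h^T = I_3$. Since $h$ is lower triangular with strictly positive diagonal entries, the uniqueness of the Cholesky decomposition of $I_3$ forces $h = I_3$. This makes the product $\rho(H^+)\, G(\Omega)_{I_3}$ unique in its expression, so every $g \in G(\Omega)$ has a unique factorization of the above form.

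The remaining point is normality of $\rho(H^+)$, which I view as the heart of the statement. The cleanest route is to observe that $H^+$ is connected (it is diffeomorphic to $\mathbb{R}_{>0}^{3}\times\mathbb{R}^{2}$) and that $\rho$ is a continuous homomorphism, so $\rho(H^+)$ is a connected subgroup of $G(\Omega)$. Combined with the previous two paragraphs, this exhibits $G(\Omega)$ as the disjoint union of the eight cosets $\rho(H^+)\,k$ with $k$ running over the finite group $G(\Omega)_{I_3}$, and $\rho(H^+)$ is therefore the identity component of $G(\Omega)$. As the identity component of any Lie group is automatically normal, normality is established without computation.

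The main obstacle I anticipate is guaranteeing that $\rho(H^+)$ really is a Lie subgroup of $G(\Omega)$ in a way that identifies it with the identity component; that is, one needs to know that $G(\Omega)$ is itself a (closed) Lie subgroup of $GL(V)$, which is standard since $G(\Omega) = \{g \in GL(V)\mid g\Omega \subset \Omega,\ g^{-1}\Omega\subset\Omega\}$ is closed. If one prefers a direct check, normality can alternatively be verified by conjugating $\rho(H^+)$ by each of the three generators $\rho(\mathrm{diag}(-1,1,1))$, $\rho(\mathrm{diag}(1,-1,1))$, $\rho(\sigma)$ of $G(\Omega)_{I_3}$ supplied by the preceding lemma and observing that each conjugation preserves the shape of elements of $H^+$; this calculation is straightforward from the definition of $\rho$ and the explicit form of $H$, but the connectedness argument avoids it entirely.
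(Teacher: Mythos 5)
Your proof is correct and fills in exactly the argument the paper leaves implicit: the corollary follows from the transitivity of $\rho(H^+)$ on $\Omega$ (giving $G(\Omega)=\rho(H^+)G(\Omega)_{I_3}$), the triviality of the intersection, and the normality of $\rho(H^+)$ as the identity component of $G(\Omega)$ (the paper itself notes later that the identity component of $G(\Omega)$ is $\rho(H^+)$). Both routes you offer for normality work; the direct conjugation by $\rho(\sigma)$ and $\rho(\mathrm{diag}(\pm1,\pm1,1))$ is an easy computation since $\sigma A\sigma$ and $\mathrm{diag}(\pm1,\pm1,1)A\,\mathrm{diag}(\pm1,\pm1,1)$ stay in $H^+$ for $A\in H^+$.
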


We  extend the action of  $G(\Omega)$ to $T_\Omega$ by $g(z)=g(x)+ig(y)$, $z=x+iy$.
The translation  $t_v : z\mapsto z+v$ by $v\in V$ is
 a holomorphic automorphism of $T_\Omega$,
 and the group $N^+$ of all such translations is an Abelian group isomorphic to $V$.
The rational map $s$ on $T_\Omega$ defined by
$$
 s : T_{\Omega} \owns z \mapsto 
 \begin{pmatrix}
 -\frac{1}{z_1} & 0 & \frac{z_4}{z_1} \\ 
 0 & -\frac{1}{z_2} & \frac{z_5}{z_2} \\ 
\frac{z_4}{z_1} & \frac{z_5}{z_2} & \frac{{\det z} }{z_1z_2}
 \end{pmatrix}
 \in T_{\Omega} 
$$
 belongs to $G(T_{\Omega})$.
Note that $s^2 {= \rho(\mathrm{diag}(-1,-1,1))} \not=\mathrm{Id}$, 
 so that $s$ is not an involution, but $s^4 = Id$.
Let $V'$ be the subspace of $V$ defined by
$$
 V' := 
 \left\{u=\begin{pmatrix} u_1 & 0 & 0\\ 0 & u_2 & 0 \\ 0 & 0 & 0\end{pmatrix}\; |\; 
     u_1, u_2 \in \mathbb{R}\right\}.
$$
 For any $u\in V'$,
 let $\tilde{t}_u=s\circ t_u\circ s$  and denote by $N^-$ the subgroup of $G(T_\Omega)$  of these transformations.
 
Keeping in mind the inclusion $T_{\Omega} \subset T_{\Sym^{++}(3,\mathbb{R})}$,
 we shall realize the group $G(T_{\Omega})$ as a subgroup of $G(T_{\Sym^{++}(3,\mathbb{R})})$
 (see \autoref{thm:G-GTOmega}).
In other words, 
 we shall see that any $g \in G(T_{\Omega})$
 can be 
 described by an element of $\Sp(6,\mathbb{R})$.
For $A \in H$,
 the corresponding $\rho(A) \in G(\Omega)$ is induced by the matrix 
 $\begin{pmatrix} A & 0 \\ 0 & A^{-T} \end{pmatrix}
 \in G(3, \real) \subset \Sp(6, \mathbb{R})$. 
For $v \in V$,
 we identify the translation $t_v : T_{\Omega} \owns z \mapsto z + v \in T_{\Omega}$ with
 the matrix
 $\begin{pmatrix} I & v \\ 0 & I \end{pmatrix} \in \Sp(6,\mathbb{R})$.
In this way,
 we regard $G_0:= \rho(H)$ 
 and $N^+$ as subgroups of $\Sp(6,\mathbb{R})$.
On the other hand,
 from a straightforward calculation,
 we see that the map $s$
  corresponds to the matrix
$$
 \begin{pmatrix} 
 0 & & & -1 & & \\ 
 & 0 & &  &-1 & \\
 & & 1 &  & & 0 \\
 1 & & &  0 & & \\
 & 1 & &  & 0 & \\
 & & 0 &  & & 1 
 \end{pmatrix} \in \Sp(6,\mathbb{R}).  
$$
Then an easy matrix calculation tells us that
 the transform $\tilde{t}_u = s t_u s^{-1} \in N^-$ corresponds to 
 $\begin{pmatrix} I & 0 \\ -u & I \end{pmatrix} \in \Sp(6,\mathbb{R})$.

Put $p_0 = i I \in T_{\Omega}$
 and 
 let $K=\left\{g \in G(T_{\Omega})\; |\; g \cdot p_0 = p_0\right\}$ be the isotropy subgroup 
  of $G(T_{\Omega})$
 at the point $p_0$.


\begin{lemma}[cf. \mbox{\cite[Lemma 4.1]{Gea87b}}] \label{lemma:isotropy}
One has
 $$
 K = \left\{
 k_{\theta, \phi} = \begin{pmatrix} C_{\theta,\phi} & - S_{\theta, \phi} \\ S_{\theta,\phi} & C_{\theta,\phi} \end{pmatrix} \; |\; 
 \theta,\phi \in [0,2\pi) \right\},
 $$
 where
 $$
 C_{\theta,\phi} = \begin{pmatrix} \cos \theta & & \\ & \cos \phi & \\ & & 1 \end{pmatrix}, \quad
 S_{\theta,\phi} = \begin{pmatrix} \sin \theta & & \\ & \sin \phi & \\ & & 0 \end{pmatrix}.
 $$
\end{lemma}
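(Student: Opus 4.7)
The plan is to realize $K$ as a subgroup of the maximal compact subgroup $K_{\Sp}$ of $\Sp(6,\mathbb{R})$, namely the isotropy of the Siegel upper half space at $p_0$. One has the standard identification
\[ K_{\Sp} = \left\{ \begin{pmatrix} A & -B \\ B & A \end{pmatrix} : A+iB \in U(3) \right\} \cong U(3), \]
under which the elements $k_{\theta,\phi}$ of the statement correspond to the diagonal unitaries $\mathrm{diag}(e^{i\theta}, e^{i\phi}, 1)$. Since $K \subset K_{\Sp}$, it suffices to determine which elements of $K_{\Sp}$ actually preserve $T_\Omega$.

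The inclusion $\{k_{\theta,\phi}\} \subset K$ is checked by a direct computation: for $z \in V_\mathbb{C}$ (so $z_{12}=0$), the matrix $S_{\theta,\phi}z + C_{\theta,\phi}$ is upper triangular with vanishing $(1,2)$-entry, and the same holds for its inverse; combined with $(C_{\theta,\phi}z - S_{\theta,\phi})_{12}=0$, this yields $(k_{\theta,\phi}\cdot z)_{12}=0$, hence $k_{\theta,\phi}\cdot z \in V_\mathbb{C} \cap T_{\Sym^{++}(3,\mathbb{R})} = T_\Omega$.

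For the upper bound I compute the Lie algebra $\mathfrak{k}$ of $K$. An element $X = \begin{pmatrix} A & -B \\ B & A \end{pmatrix} \in \mathfrak{k}_{\Sp}$, with $A$ skew-symmetric and $B$ symmetric, generates on the Siegel upper half space the holomorphic vector field
\[ v_X(z) = [A,z] - B - zBz. \]
For $X \in \mathfrak{k}$ this vector field must be tangent to $T_\Omega$ at every point, which amounts to $(v_X(z))_{12} = 0$ for every $z \in V_\mathbb{C}$. Expanding $(v_X(z))_{12}$ as a polynomial in the five independent components of $z$ and collecting coefficients of degree $0$, $1$, and $2$ produces, successively, $B_{12}=0$; $A_{12}=A_{13}=A_{23}=0$; and $B_{13}=B_{23}=B_{33}=0$. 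Together with the skew-symmetry of $A$ and the symmetry of $B$, this forces $A=0$ and $B = \mathrm{diag}(b_1,b_2,0)$, so $\dim\mathfrak{k}=2$.

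To conclude, $G(T_\Omega)$ is a connected Lie group acting transitively on the contractible convex domain $T_\Omega = V + i\Omega$ (transitivity follows from $N^+ \rho(H^+) \cdot p_0 = T_\Omega$). The long exact homotopy sequence of the fibration $K \to G(T_\Omega) \to T_\Omega$ then gives $\pi_0(K)=0$, so $K$ is connected. Being a compact connected $2$-dimensional Lie group, $K$ equals $\exp(\mathfrak{k})$, and exponentiating the two commuting generators of $\mathfrak{k}$ produces precisely the matrices $k_{\theta,\phi}$. The main technical obstacle is the polynomial bookkeeping in the Lie algebra computation, but since each relevant coefficient is a single monomial in the entries of $A$ and $B$, the identification is short.
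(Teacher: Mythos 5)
Your argument has a genuine gap at its very first step: the assertion $K \subset K_{\Sp}$. At the point where this lemma is needed, all that is known about $K$ is that it is the isotropy subgroup at $p_0$ of the group of holomorphic automorphisms of $T_\Omega$; there is no a priori reason why such an automorphism should extend to the larger domain $T_{\Sym^{++}(3,\mathbb{R})}$ or be a symplectic linear fractional transformation. (The containment $T_\Omega \subset T_{\Sym^{++}(3,\mathbb{R})}$ gives an inclusion of domains, not of automorphism groups in either direction.) The same unproved hypothesis reappears in your Lie algebra computation, where you take the general element of $\mathfrak{k}$ to be a vector field of the special form $[A,z]-B-zBz$ with $A$ skew and $B$ symmetric. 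This is exactly the hard content of the lemma: one must first show that every complete holomorphic vector field on $T_\Omega$ is polynomial of degree at most $2$ (Kaup--Matsushima--Ochiai), obtain the grading $\mathfrak{g}(T_\Omega)=\mathfrak{g}_{-1}\oplus\mathfrak{g}_0\oplus\mathfrak{g}_1$, and then determine $\mathfrak{g}_1$ by solving the bracket constraint $[\mathfrak{g}_{-1},\mathfrak{g}_1]\subset\mathfrak{g}_0=\rho(\mathfrak{h})$ for an \emph{arbitrary} $V_{\mathbb{C}}$-valued symmetric quadratic form $q(z,z)$, not merely one of the form $zBz$. This is precisely the nontrivial computation carried out in Geatti's paper, which is why the statement is cited there rather than proved; note in particular that the theorem identifying $G(T_\Omega)$ with a subgroup of $\Sp(6,\mathbb{R})$ comes \emph{after} this lemma in the logical order, so invoking it here would be circular.

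The remaining parts of your proposal are sound and would survive a repair: the verification that each $k_{\theta,\phi}$ preserves $V_{\mathbb{C}}$ (via the triangular structure of $S_{\theta,\phi}z+C_{\theta,\phi}$) correctly gives the inclusion $\{k_{\theta,\phi}\}\subset K$; the polynomial bookkeeping forcing $A=0$ and $B=\mathrm{diag}(b_1,b_2,0)$ is correct \emph{once} the form of the vector field is granted; and the connectedness of $K$ via the fibration over the contractible base $T_\Omega$ is a clean way to pass from $\mathfrak{k}$ to $K$. To close the gap you would need either to import the Kaup--Matsushima--Ochiai structure theorem and redo the degree-$1$ computation for general quadratic fields, or simply to cite Geatti's Lemma 4.1 as the paper does.
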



\begin{theorem} \label{thm:GTOmega}
The group $G(T_{\Omega})$ is generated by $G_0$,\, $N^+$ and $s$.
\end{theorem}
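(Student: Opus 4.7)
Let $G' := \langle G_0, N^+, s\rangle$ denote the subgroup of $G(T_\Omega)$ generated by the three listed pieces; the plan is to show $G' = G(T_\Omega)$. First I would verify that $G'$ already acts transitively on $T_\Omega$: for any $z = x+iy \in T_\Omega$, pick $A \in H^+$ with $\rho(A) I_3 = y$ (possible since $\rho(H^+)$ is transitive on $\Omega$), so that $t_x\rho(A)\cdot p_0 = z$ with $t_x\rho(A) \in N^+ G_0 \subset G'$. Consequently, for any $g\in G(T_\Omega)$ I can choose $g_1\in G'$ with $g_1\cdot p_0 = g\cdot p_0$, and then $g = g_1 k$ with $k := g_1^{-1}g \in K$, so the theorem reduces to the inclusion $K \subset G'$.

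To prove $K\subset G'$, I would apply the symplectic triple decomposition of \autoref{lemma:HCdecomp_Sp6} to the explicit description of $K$ provided by \autoref{lemma:isotropy}. When $\cos\theta\cos\phi \ne 0$, the lower-right block $D = C_{\theta,\phi}$ of $k_{\theta,\phi}$ is invertible, and \autoref{lemma:HCdecomp_Sp6} yields
\[
L = D^{-T} = \mathrm{diag}(\sec\theta, \sec\phi, 1), \quad
v = BD^{-1} = -\mathrm{diag}(\tan\theta, \tan\phi, 0), \quad
u = D^{-1}C = \mathrm{diag}(\tan\theta, \tan\phi, 0).
\]
Each of these matrices is diagonal with prescribed third entry, and a direct inspection confirms $v\in V$, $L\in H$, and $u\in V'$. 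Thus $k_{\theta,\phi} = t_v\,\rho(L)\,\tilde{t}_{-u}$ is a product of three generators of $G'$, the last factor lying in $N^-\subset G'$.

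For the remaining parameters with $\cos\theta = 0$ or $\cos\phi = 0$, I would right-multiply $k_{\theta,\phi}$ by an auxiliary translation $t_w\in N^+$ with $w = \mathrm{diag}(w_1, w_2, 0)\in V$ chosen so that the new lower-right block $S_{\theta,\phi}w + C_{\theta,\phi} = \mathrm{diag}(\cos\theta + w_1\sin\theta,\, \cos\phi + w_2\sin\phi,\, 1)$ becomes invertible; such $w_1, w_2\in\mathbb{R}$ always exist since $\sin$ and $\cos$ never vanish simultaneously. Re-applying \autoref{lemma:HCdecomp_Sp6} to $k_{\theta,\phi}t_w$ and checking as in the generic case that its three factors lie in $N^+, G_0, N^-$, one recovers $k_{\theta,\phi} = (k_{\theta,\phi}t_w)\cdot t_{-w}\in G'$. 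The main obstacle throughout is the verification that the triple-decomposition factors $v, L, u$ genuinely sit in the constrained subspaces $V, H, V'$ rather than in the larger ambient spaces $\mathrm{Sym}(3,\mathbb{R}), \mathrm{GL}(3,\mathbb{R}), \mathrm{Sym}(3,\mathbb{R})$; this hinges on the diagonal form of $C_{\theta,\phi}$ and $S_{\theta,\phi}$, which keeps all the output matrices diagonal with the third diagonal entry automatically compatible with the definitions of $H$ and $V'$.
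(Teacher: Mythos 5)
Your proof is correct and follows essentially the same route as the paper: reduce to the isotropy group $K$ via transitivity of $N^+G_0$ on $T_\Omega$, then write $k_{\theta,\phi}=t_v\,\rho(L)\,\tilde{t}_{-u}$ using \autoref{lemma:HCdecomp_Sp6} and check directly that the factors land in $V$, $H$, $V'$. The only (harmless) variation is in the degenerate case $\cos\theta\cos\phi=0$, where you reduce to the generic case by an auxiliary translation $t_w$ instead of composing with a rotation $k_{-\alpha,-\alpha}$ as the paper does.
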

\begin{proof}
 Let us take any $g \in G(T_{\Omega})$
 and put $z = g \cdot p_0$.
Since $y = \Im z \in \Omega$,
 we can find $A \in H$ for which $\rho(A) \cdot I = y$.
Putting $x = \Re z \in V$,
 we have
 $g \cdot p_0 = z = t_x \rho(A) \cdot p_0$,
 so that $k = \rho(A)^{-1} t_x^{-1} g$ belongs to $K$.
Since $g = t_x \rho(A) k$,
 it is enough to show that $k$ is generated by $N^+$, $G_0$ and $s$.  

By \autoref{lemma:isotropy},
 we have $k = k_{\theta, \phi}$ with some $\theta,\phi \in [0,2\pi)$.
First we consider the case $\theta = \phi$.        
If $\theta \ne \frac{\pi}{2},\, \frac{3\pi}{2}$,
 then $\det C_{\theta, \theta} = \cos^2 \theta \ne 0$,
 and we have
$$
 k_{\theta, \theta} = t_v \rho(A) \tilde{t}_{-u} \in N^+ G_0 N^-
$$
 with
$$
 A =  (C_{\theta,\theta})^{-T} \in H, \quad 
 u = (C_{\theta,\theta})^{-1} S_{\theta,\theta} \in V', \quad
 v = - S_{\theta,\theta} (C_{\theta,\theta})^{-1} \in V
$$
 thanks to \autoref{lemma:HCdecomp_Sp6}.
Thus $k_{\theta,\theta}$ is generated by $N^+$, $G_0$ and $s$ in this case.
For the case that $\theta = \frac{\pi}{2}$ and $\theta = \frac{3\pi}{2}$,
 the element $k_{\theta,\theta}$ equals $s$ and $s^{-1}$ respectively,
 so that the claim holds for these cases, too.

Now we consider a general $k_{\theta,\phi}$.
We can take an appropriate $\alpha \in \mathbb{R}$ for which
 $\det C_{\theta + \alpha,\,\phi + \alpha} \ne 0$.
Similarly to the argument above,
 we see from \autoref{lemma:HCdecomp_Sp6} that
 $k_{\theta+\alpha, \phi+\alpha} \in N^+ G_0 N^-$.    
Finally,
 we have $k_{\theta,\phi} = k_{-\alpha,-\alpha} k_{\theta+\alpha, \phi+\alpha}$,
 which completes the proof.
\end{proof}

We remark that $G_0$ is not equal to the whole group $G(\Omega)$, 
 but is a subgroup of $G(\Omega)$ of index 2.
Indeed, $\rho(\sigma) \in G(\Omega) \setminus G_0$,
 and $\rho(\sigma)$ is a holomorphic automorphism on $T_{\Omega}$ but not an element of $G(T_{\Omega})$.
We also note that $G_0$ is not connected. 
Its identity component is $\rho(H^+)$.  
 
 \bigskip
 
Let us give another explicit description of the group $G(T_{\Omega})$
 as a subgroup of $\Sp(6,\mathbb{R})$.
We set
\begin{align*}
 W :&= 
 \left\{\begin{pmatrix} x_1 & 0 & x_6 \\ 0 & x_2 & x_7 \\ x_4 & x_5 & x_3\end{pmatrix}\; |\;
     x_1, \dots, x_7 \in \mathbb{R}\right\},\\
H' &:=
 \left\{\begin{pmatrix} a_1 & 0 & 0 \\ 0 & a_2 & 0 \\ a_4 & a_5 & a_3 \end{pmatrix}\; |\;
     a_1,\dots,a_5 \in \mathbb{R},\,a_3>0\right\},
\end{align*}
Then we have
\begin{align}
A,\,B \in H' &\Rightarrow A B \in H', \label{eqn:H-H}\\
A \in H',\,w \in W & \Rightarrow Aw,\,w A^T \in W, \label{eqn:H-W}\\
A \in H',\,u \in V' & \Rightarrow u A,\, A^T u \in V', \label{eqn:H-Vprime}
\end{align}
and
\begin{equation}\label{eqn:HWVprime}
A \in H',\,u \in V',\,w \in W \Rightarrow A + wu \in H'.
\end{equation}
Define
\begin{equation} \label{eqn:def_of_G}
 G := 
 \left\{ \begin{pmatrix} A & B \\ C & D \end{pmatrix} \in \Sp(6,\mathbb{R})\; |\; 
     A \in H',  \,B \in W,\,C \in V',\, D^T\in H'\right\}.
\end{equation}
Let us check that $G$ is a subgroup of $\Sp(6,\mathbb{R})$.
For two elements
 $g = \begin{pmatrix} A & B \\ C & D \end{pmatrix}$
 and  
 $g' = \begin{pmatrix} A' & B' \\ C' & D' \end{pmatrix}$
 of $G$,
 we have
$$
 g g' 
 = \begin{pmatrix} A A' + B C' & A B' + B D' \\ C A' + D C' & C B' + D D' \end{pmatrix}.
$$
Then
 we see from \autoref{eqn:H-H} -- \autoref{eqn:HWVprime} that
$$
 A A' + B C' \in H',\quad
 A B' + B D' \in W, \quad
 C A' + D C' \in V',\quad
 (C B' + D D')^T \in H',
$$
 so that $g g' \in G$.
On the other hand,
 since $G \subset \Sp(6,\mathbb{R})$,
 we have 
 $$
 g^{-1} 
 = \begin{pmatrix} 0 & I \\ -I & 0 \end{pmatrix} 
  g^T \begin{pmatrix} 0 & -I \\ I & 0 \end{pmatrix} 
 = \begin{pmatrix} D^T & -B^T \\ -C^T & A^T\end{pmatrix},
 $$
 for $g = \begin{pmatrix} A & B \\ C & D \end{pmatrix} \in G$,
 whence we see that $g^{-1} \in G$.


\begin{theorem} \label{thm:G-GTOmega}
The linear fractional action of $\Sp(6,\mathbb{R})$ on the Siegel upper half plane $T_{\Sym^{++}(3,\mathbb{R})}$
 induces an isomorphism from $G$ onto $G(T_{\Omega})$.
\end{theorem}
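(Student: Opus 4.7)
The plan is to show that the linear fractional action of $\Sp(6,\mathbb{R})$ on $T_{\Sym^{++}(3,\mathbb{R})}$, restricted to $G$, is an injective group homomorphism with image exactly $G(T_\Omega)$. Injectivity is the simplest part: the kernel of the action of $\Sp(6,\mathbb{R})$ on $T_{\Sym^{++}(3,\mathbb{R})}$ is $\{\pm I\}$, and $-I \notin G$ because its lower-right block $-I$ violates the sign condition $a_3 > 0$ in the definition of $H'$.

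To show that the image contains $G(T_\Omega)$, I would invoke \autoref{thm:GTOmega}: each generator --- every $\rho(A)$ with $A \in H$, every translation $t_v$ with $v \in V$, and the element $s$ --- lifts to $G$. Indeed $\begin{pmatrix} A & 0 \\ 0 & A^{-T}\end{pmatrix}$ belongs to $G$ because $A^{-1}$ is again of $H$-form with positive $(3,3)$-entry $1/a_3$; $\begin{pmatrix} I & v \\ 0 & I\end{pmatrix}$ belongs to $G$ because $v \in V \subset W$; and the explicit matrix describing $s$ has blocks in $H'$, $W$, $V'$, $H'$ by inspection.

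The main content is the reverse inclusion, for which I would prove that $G$ itself is generated, as a group, by $G_0$, $N^+$, and $s$ (equivalently by $G_0$, $N^+$, $N^-$). For $g = \begin{pmatrix} A & B \\ C & D\end{pmatrix} \in G$ with $D$ invertible, the triple decomposition of \autoref{lemma:HCdecomp_Sp6} produces $g = t_v \rho(L) \tilde{t}_{-u}$ with $L = D^{-T}$, $v = BD^{-1}$, $u = D^{-1}C$. Each factor has the required shape: $L \in H$ because $D^T \in H'$ is invertible and $H'$ is closed under inversion in that case; $v \in V$ because $v$ is symmetric by the symplectic identity, and its $(1,2)$-entry vanishes thanks to $B_{12} = 0$ together with $D^{-1}$ being upper triangular; and $u \in V'$ by direct product computation using $C \in V'$.

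The hard case, and the main obstacle, is the singular locus $\det D = 0$. Here I would exploit the symplectic identity $AD^T - BC^T = I$: reading its $(1,1)$ and $(2,2)$ diagonal entries inside $G$ gives $a_i d_i - x_i c_i = 1$ for $i = 1, 2$, so whenever $d_i = 0$ one is forced to have $c_i \ne 0$ and $x_i \ne 0$. This permits the choice of $\xi = \mathrm{diag}(\xi_1, \xi_2, 0) \in V'$ with appropriately nonzero components such that $g' := \tilde{t}_{-\xi} g \in G$ has invertible lower-right block $\xi B + D$ of determinant $(\xi_1 x_1 + d_1)(\xi_2 x_2 + d_2) d_3$. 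The invertible case then realises $g'$ as a product in $N^+ G_0 N^-$, whence $g = \tilde{t}_\xi g' \in \langle G_0, N^+, s\rangle$. Combined with injectivity, this yields the asserted isomorphism $G \cong G(T_\Omega)$.
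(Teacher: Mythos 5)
Your argument for surjectivity is sound and in fact takes a slightly different (and more self-contained) route than the paper: instead of first verifying directly that every $g\in G$ maps $T_{\Omega}$ into $T_{\Omega}$ and then quoting \autoref{thm:GTOmega} only for surjectivity, you prove that $G$ itself is generated by $G_0$, $N^+$ and $s$, handling the singular locus $\det D=0$ by multiplying by a suitable $\tilde{t}_{-\xi}\in N^-$ so that the new lower-right block $\xi B+D$ becomes invertible (the identity $a_kd_k-b_kc_k=1$ guaranteeing $b_k\neq 0$ when $d_k=0$ is exactly the paper's \autoref{eqn:ad-bc}). That is a legitimate alternative which simultaneously gives well-definedness of the map $G\to G(T_\Omega)$ and the equality of images; the regular case is essentially the paper's \autoref{prop:Upsilon}.

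The genuine gap is in your injectivity step. The homomorphism whose kernel you must compute is $g\mapsto g|_{T_{\Omega}}$, not $g\mapsto g|_{T_{\Sym^{++}(3,\mathbb{R})}}$. Knowing that the kernel of the action of $\Sp(6,\mathbb{R})$ on the full Siegel domain is $\{\pm I\}$ only tells you that an element acting trivially on \emph{all} of $T_{\Sym^{++}(3,\mathbb{R})}$ is $\pm I$; it does not exclude an element of $G$ that acts nontrivially on the $6$-dimensional Siegel domain yet restricts to the identity on the $5$-dimensional subdomain $T_{\Omega}$ (compare $(z,w)\mapsto(z,-w)$, which is the identity on $\{w=0\}$). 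This is precisely why the paper's proof does not stop at the $\{\pm I\}$ observation: it takes $g\in G$ fixing $T_{\Omega}$ pointwise, deduces from $g\cdot p_0=p_0$ that $g=\begin{pmatrix} A & -B\\ B & A\end{pmatrix}$ with $A+iB\in U(3)$, uses the block conditions defining $G$ to force $A=C_{\theta,\phi}$, $B=S_{\theta,\phi}$, and then evaluates the action on points $z\in T_\Omega$ with $z_1=z_2=i$ to conclude $\theta=\phi=0$. An equivalent repair in your framework would be to expand the identity $Az+B=z(Cz+D)$ as a polynomial identity on the subspace $V_{\mathbb{C}}$ (not on all of $\Sym(3,\mathbb{C})$) and show it forces $C=0$, $B=0$, $A=D=I$; either way, some argument specific to $T_\Omega$ is required, and your proposal currently has none.
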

\begin{proof}
For $g = \begin{pmatrix} A & B \\ C & D \end{pmatrix} \in G$ and $z \in T_{\Omega}$,
 we obtain
 $A z + B \in W_{\mathbb{C}}$ by \autoref{eqn:H-W} and $(C z + D)^T \in H'_{\mathbb{C}}$ by \autoref{eqn:HWVprime},
 so that we have $g \cdot z \in W_{\mathbb{C}}$ by \autoref{eqn:H-W}.
On the other hand,
 since $g \in \Sp(6,\mathbb{R})$ and $z \in T_{\Sym^{++}(3,\mathbb{R})}$,
 we have $g \cdot z \in T_{\Sym^{++}(3,\mathbb{R})}$.
Thus $g \cdot z \in T_{\Omega} = T_{\Sym^{++}(3,\mathbb{R})} \cap W_{\mathbb{C}}$,
 and we have a group homomorphism from $G$ into $G(T_{\Omega})$.
Thanks to \autoref{thm:GTOmega},
 the map is surjective because
 $G$ contains the matrices corresponding to
 $t_v \in N^+\,\,\,(v \in V),\,
  \rho(A) \in G_0\,\,(A \in H)$
 and $s$.
Let us show the injectivity.
Take $g \in G$ such that
 $g \cdot z = z$ for all $z \in T_{\Omega}$.
Then $g \cdot p_0 = p_0$ together with $g \in \Sp(6,\mathbb{R})$ implies
 $g = \begin{pmatrix} A & -B \\ B & A \end{pmatrix}$ with $A + i B \in U(3)$.
Since $g \in G$, 
 we have $A \in H'$, $A^T\in H'$ and $B \in V'$.
Thus we get $A = C_{\theta,\phi}$ and $B = S_{\theta,\phi}$ with some $\theta, \phi \in [0,2\pi)$.
Let us consider $z \in T_{\Omega}$ with $z_1 = z_2 = i$.
Then
$$
 g \cdot
 \begin{pmatrix} i & 0 & z_4 \\ 0 & i & z_5 \\ z_4 & z_5 & z_3 \end{pmatrix}
 =
 \begin{pmatrix} i & 0 & e^{i \theta} z_4 \\ 0 & i & e^{i \phi} z_5 \\ 
 e^{i \theta} z_4 & e ^{i \phi} z_5 & z_3 + z_4^2 e^{i \theta} \sin \theta + z_5^2 e^{i \phi} \sin \phi \end{pmatrix}.
$$ 
Thus $g \cdot z = z$ implies $\theta = \phi = 0$, so that $g = I$.  
\end{proof}


We see from  \autoref{thm:G-GTOmega}  
 that
 each $g \in G(T_{\Omega})$ is uniquely extended to 
 a linear fractional transform on $T_{\Sym^{++}(3,\mathbb{R})}$.
Let us present one more description of the group $G$:


\begin{proposition} \label{prop:desc_G}
One has
$$
 G = 
 \left\{ \begin{pmatrix} A & B \\ C & D \end{pmatrix} \in \Sp(6,\mathbb{R})\; |\;
     A \in H',\, D^T \in H', \,D^T B \in V,\,\,C D^T\in V'\right\}.
$$
\end{proposition}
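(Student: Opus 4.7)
The plan is to prove the claimed equality by verifying both set inclusions separately.

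For the inclusion $G \subseteq \{\cdots\}$, take $g = \begin{pmatrix} A & B \\ C & D\end{pmatrix} \in G$, so by definition $A, D^T \in H'$, $B \in W$, and $C \in V'$. The symplectic relation \autoref{eqn:Sp6} forces $D^T B \in \Sym(3,\mathbb{R})$, and the closure property \autoref{eqn:H-W} applied to $D^T \in H'$ and $B \in W$ gives $D^T B \in W$; combining these, $D^T B \in W \cap \Sym(3,\mathbb{R}) = V$. Similarly, \autoref{eqn:H-Vprime} applied to $C \in V'$ and $D^T \in H'$ yields $C D^T \in V'$, so $g$ belongs to the right-hand side.

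For the reverse inclusion, assume $g \in \Sp(6,\mathbb{R})$ satisfies $A, D^T \in H'$, $D^T B \in V$, and $C D^T \in V'$, and aim to derive $B \in W$ and $C \in V'$. The clean case is when $D$ is invertible: one checks by explicit computation that the inverse of an invertible element of $H'$ preserves the lower-triangular zero pattern and keeps the $(3,3)$-entry positive, so $D^{-T} \in H'$. Writing
\[
 B = D^{-T}(D^T B), \qquad C = (C D^T)\, D^{-T},
\]
I apply \autoref{eqn:H-W} to $D^{-T} \in H'$ and $D^T B \in V \subseteq W$ to conclude $B \in W$, and \autoref{eqn:H-Vprime} to $C D^T \in V'$ and $D^{-T} \in H'$ to conclude $C \in V'$.

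The main obstacle is the case where $D$ is singular, which for $D^T \in H'$ occurs precisely when one of the diagonal entries $d_1, d_2$ of $D^T$ vanishes (the entry $d_3 > 0$ is forced). Here the inversion trick is no longer available, and $B \in W$, $C \in V'$ must be extracted from the given data directly. My plan is to exploit the remaining symplectic identities $A D^T - B C^T = I$ from \autoref{eqn:Sp6-2} and $D^T A - B^T C = I$ from \autoref{eqn:Sp6}: by \autoref{eqn:H-H} the products $A D^T$ and $D^T A$ lie in $H'$, which forces $B C^T$ and $B^T C$ to have vanishing entries in the upper-triangular positions $(1,2), (1,3), (2,3)$; combining this rigidity with the explicit form of the products $D^T B \in V$ and $C D^T \in V'$, one should pin down the remaining off-diagonal entries of $B$ and $C$ by a case analysis on which of $d_1, d_2$ vanishes. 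This entry-by-entry bookkeeping, which amounts to solving a small degenerate linear system while keeping track of the vanishing patterns, is the delicate step I expect to require the most care.
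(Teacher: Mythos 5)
Your first inclusion and your handling of the case $\det D \neq 0$ coincide with the paper's proof, including the (tacit, in the paper) check that the inverse of an invertible element of $H'$ again lies in $H'$. The genuine gap is the singular case, which you only outline. The paper disposes of it by a one-line closure argument (asserting in effect that the locus $\det D \neq 0$ is dense in the right-hand side), whereas you propose to recover $B \in W$ and $C \in V'$ directly from the symplectic identities. That bookkeeping cannot be completed. Carrying out your plan with $d_1 = 0$: after the easy reductions (which give $c_{13}=c_{23}=c_{33}=c_{31}=0$ and $b_{21}=0$), the $(1,2)$ entry of $AD^T - BC^T = I$ and the $(2,1)$ entry of $D^TA - B^TC = I$ yield the linear system $b_{11}c_{21} + c_{22}b_{12} = 0$, $b_{22}c_{21} + c_{11}b_{12} = 0$ for the two entries you still need to kill, while the diagonal of $AD^T - BC^T = I$ gives $b_{11}c_{11} = -1$ and $b_{22}c_{22} = a_2 d_2 - 1$. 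The determinant of that system is $b_{11}c_{11} - b_{22}c_{22} = -a_2 d_2$, which vanishes whenever $a_2 d_2 = 0$, and then nonzero $b_{12}, c_{21}$ survive every available constraint.

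Indeed the asserted equality fails for singular $D$: take
\[
A = \begin{pmatrix} 1&0&0\\0&0&0\\0&0&1\end{pmatrix},\quad
B = \begin{pmatrix} 1&1&0\\0&1&0\\0&0&0\end{pmatrix},\quad
C = \begin{pmatrix} -1&0&0\\1&-1&0\\0&0&0\end{pmatrix},\quad
D = \begin{pmatrix} 0&0&0\\0&1&0\\0&0&1\end{pmatrix}.
\]
One checks that $BA^T = \mathrm{diag}(1,0,0)$ and $CD^T = \mathrm{diag}(0,-1,0)$ are symmetric and that $AD^T - BC^T = I$, so $g = \begin{pmatrix} A&B\\C&D\end{pmatrix} \in \Sp(6,\mathbb{R})$ by \autoref{eqn:Sp6-2}; moreover $A, D^T \in H'$, $D^TB = \mathrm{diag}(0,1,0) \in V$ and $CD^T \in V'$, yet $B \notin W$ and $C \notin V'$. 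Thus $g$ belongs to the right-hand side but not to $G$, so the step you flag as delicate is in fact impossible; the same example shows that the paper's closure argument does not repair it either, since $G$ is closed, contains the nonsingular part of the right-hand side, and misses $g$, so that part is not dense. The equality, and both proofs, are correct on the open set $\det D \neq 0$, which is the only regime in which the proposition is invoked later (\autoref{prop:Upsilon}, \autoref{prop:desc_Gamma}); the honest fix is to add the hypothesis $\det D \neq 0$ to the statement rather than to pursue the entry-by-entry case analysis.
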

\begin{proof}
Let $G'$ be the right-hand side.
By \autoref{eqn:Sp6}, \autoref{eqn:H-W} and \autoref{eqn:H-Vprime}, we have $G \subset G'$.
To show the converse inclusion,
 we take $g \in G'$ with $\det D \ne 0$.
By \autoref{eqn:H-W} and \autoref{eqn:H-Vprime} again,
 we get
 $B = D^{-T} (D^T B) \in W$
 and
 $C = (C D^T)D^{-T} \in V'$.
Thus $g \in G$.
Since both $G$ and $G'$ are closed subset of $\mathrm{Mat}(6,\mathbb{R})$,
 we obtain $G' \subset G$ by a closure argument. 
\end{proof}

 
%
%
%
 
Let $\Upsilon \subset G(T_{\Omega})$ be the set consisting of $g \in G(T_{\Omega})$
 such that there exist $v \in V,\,\,A \in H$ and $u \in V'$ 
 for which
 $g = t_v \rho(A) \tilde{t}_u$.
Identifying $G(T_{\Omega})$ with $G$ by \autoref{thm:G-GTOmega}, 
 we get an explicit description of the set $\Upsilon$. 


\begin{proposition} \label{prop:Upsilon}
One has
 $$\Upsilon = \left\{\begin{pmatrix} A & B \\ C & D \end{pmatrix} \in G\; | \; \det D \not= 0\right\}.$$
Therefore  $\Upsilon$ is an open dense subset of $G(T_{\Omega})$.
\end{proposition}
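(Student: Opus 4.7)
The plan is to establish the set-theoretic equality in both directions using \autoref{lemma:HCdecomp_Sp6}, and then handle openness and density via the structural results of Section 2. For the inclusion $\Upsilon \subseteq \{g \in G : \det D \ne 0\}$, I would just carry out the block-matrix multiplication $t_v \rho(A) \tilde{t}_u$ using the symplectic presentations recalled earlier; the bottom-right block comes out to $A^{-T}$, which is invertible because every $A \in H$ satisfies $a_1 a_2 \ne 0$ and $a_3 > 0$.

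For the reverse inclusion, I would take $g = \begin{pmatrix} A & B \\ C & D \end{pmatrix} \in G$ with $\det D \ne 0$ and apply \autoref{lemma:HCdecomp_Sp6} to obtain the unique $\Sp(6,\mathbb{R})$-triple factorization with $L = D^{-T}$, $v = BD^{-1}$, $u' = D^{-1}C$. The remaining task is to verify that $L \in H$, $v \in V$, and $-u' \in V'$, which will exhibit $g = t_v \rho(L) \tilde{t}_{-u'}$ as an element of $\Upsilon$. The membership $L \in H$ follows from $D^T \in H'$ and $\det D \ne 0$ by direct inversion, since $H = H' \cap \GL(3,\real)$ and inversion preserves the shape and sign conditions defining $H$. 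The condition $-u' \in V'$ reduces to a short computation using $C \in V'$ (part of the definition of $G$) and the upper-triangular shape of $D^{-1}$; one checks $u' = \mathrm{diag}(c_1/d_1,\, c_2/d_2,\, 0)$. The subtle point is $v \in V$: here I would invoke \autoref{prop:desc_G} to get $D^T B \in V$, write $v = D^{-T}(D^T B) D^{-1}$, and test $v_{12} = (D^{-1} e_1)^T (D^T B) (D^{-1} e_2) = \frac{1}{d_1 d_2}(D^T B)_{12} = 0$, using that $D^{-1}$ sends $e_1, e_2$ to scalar multiples of themselves; symmetry of $v$ is inherited from $D^T B$, so $v \in V$ follows.

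For the remaining topological claim, openness is immediate since $g \mapsto \det D$ is continuous. For density, I would invoke \autoref{thm:GTOmega} and the isotropy description: writing a general $g \in G(T_\Omega)$ as $t_x \rho(A) k_{\theta,\phi}$, the bottom-right block becomes $A^{-T} C_{\theta,\phi}$, with determinant $\det(A^{-T}) \cos\theta \cos\phi$; the locus $\cos\theta \cos\phi \ne 0$ is open and dense in the two-torus $K$, so every $g$ is a limit of elements of $\Upsilon$. The main obstacle I anticipate is not conceptual but notational: the $v \in V$ step requires aligning the idiosyncratic shapes of $W, V, V', H'$ with the symplectic identities from Section 1, and \autoref{prop:desc_G} is essential — one cannot conclude $v \in V$ from $B \in W$ alone, the symplectic constraint coupling $B$ and $D$ must be used.
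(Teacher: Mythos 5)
Your proposal is correct and follows essentially the same route as the paper: both inclusions rest on \autoref{lemma:HCdecomp_Sp6}, with the reverse inclusion reduced to verifying $L=D^{-T}\in H$, $D^{-1}C\in V'$ and $v=BD^{-1}\in V$. The only divergence is the sub-step $v\in V$, where the paper obtains $BD^{-1}\in W$ directly from $B\in W$ and the closure property \autoref{eqn:H-W} (symmetry of $v$ being already guaranteed by \autoref{lemma:HCdecomp_Sp6}, so $v\in \Sym(3,\mathbb{R})\cap W=V$), rather than via \autoref{prop:desc_G} and your entrywise computation — both work — and your explicit density argument via \autoref{thm:GTOmega} supplies a justification the paper leaves implicit.
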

\begin{proof}
Let $\Upsilon'$ be the right-hand side.
The inclusion $\Upsilon \subset \Upsilon'$ follows from \autoref{lemma:HCdecomp_Sp6}.
To show the converse inclusion,
 we take $g = \begin{pmatrix} A & B \\ C & D \end{pmatrix} \in \Upsilon'$.
Then we have the equality \autoref{eqn:HC_Sp6} and \autoref{eqn:Luv}.
In particular,
 $v = BD^{-1} \in \Sym(3,\mathbb{R})$ belongs to $W$ by \autoref{eqn:H-W},
 so that we get $v \in V = \Sym(3,\mathbb{R}) \cap W$.
On the other hand,
 we have
 $u= D^{-1}C \in V'$ by \autoref{eqn:H-Vprime}
 and $L = D^{-T} \in H$. 
Thus $g = t_v \rho(L) \tilde{t}_{-u} \in \Upsilon$ and 
 the assertion is verified. 
\end{proof}
 \section{The triple decomposition of   $\Gamma$}
 We shall investigate decomposition structures of the compression semigroup $\Gamma$
 defined by \autoref{eqn:def_of_Gamma}. 
More precisely, 
 we will prove that any element $g$ of the semigroup $\Gamma$ admits 
 a \textit{triple decomposition}
 $g = t_v \rho(A) \tilde{t}_{-u}$, which is unique by \autoref{lemma:HCdecomp_Sp6}.
 
Consider the following two closed subsemigroups of $\Gamma$
 \begin{eqnarray*}
 \Gamma^+&:=& \left\{t_v\,|\,v \in \overline{\Omega}\right\}=\left\{\begin{pmatrix}I&v\\ 0&I\end{pmatrix}\; |\; v\in\overline{\Omega}\right\},\\
 \Gamma^-&:=& \left\{\tilde{t}_{-u}\,|\,u \in \overline{\Omega}\right\}=\left\{\begin{pmatrix}I&0\\ u&I\end{pmatrix}\; |\; u\in\overline{\Omega}\cap V'\right\},
 \end{eqnarray*}
 and
 \begin{eqnarray*}
 {\Gamma^+}^0&:=& \left\{t_v\,|\,v \in {\Omega}\right\}=\left\{\begin{pmatrix}I&v\\ 0&I\end{pmatrix}\; |\; v\in {\Omega}\right\},\\
 {\Gamma^-}^0&:=& \left\{\tilde{t}_{-u}\,|\,u \in {\Omega}\right\}=\left\{\begin{pmatrix}I&0\\ u&I\end{pmatrix}\; |\; u\in {\Omega}\cap V'\right\}.
 \end{eqnarray*}
The latest  are two  subsemigroups of the interior $\Gamma^0$ of $\Gamma$.
Now we state our first main theorem.


\begin{theorem} \label{thm:Gamma}
The semigroup $\Gamma$ is contained in $\Upsilon$.
Moreover, 
 one has $\Gamma = \Gamma^+ G_0 \Gamma^-$. 
\end{theorem}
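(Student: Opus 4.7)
The plan is to prove the two inclusions $\Gamma^+ G_0 \Gamma^- \subset \Gamma$ and $\Gamma \subset \Gamma^+ G_0 \Gamma^-$ separately, with the containment $\Gamma \subset \Upsilon$ falling out of the second. The easy direction $\Gamma^+ G_0 \Gamma^- \subset \Gamma$ reduces to checking that each of the three factors stabilizes $\Omega$: for $v \in \overline{\Omega}$, $t_v \cdot y = y + v$ is positive definite and lies in $V$; for $A \in H$, $\rho(A)\cdot \Omega = \Omega$ by the defining property of $H$; for $u \in \overline{\Omega}\cap V'$, the identity $(\tilde{t}_{-u}\cdot y)^{-1} = y^{-1} + u$ shows the image is positive definite, while $\tilde{t}_{-u} \in G$ places it in $V$.

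For the reverse inclusion, take $g = \begin{pmatrix} A & B \\ C & D\end{pmatrix} \in \Gamma$ as in \autoref{prop:desc_G}. Since $D^T \in H'$, $D$ is upper-triangular with diagonal entries $d_1, d_2, d_3$ and $d_3 > 0$, so $D$ is invertible iff $d_1 d_2 \ne 0$. The crucial step is to show $d_1 \ne 0$ (by symmetry $d_2 \ne 0$). Suppose $d_1 = 0$; the $(1,1)$-coefficient of the symplectic identity $D^T A - B^T C = I$ (cf.\ \autoref{eqn:Sp6}) reduces to $-b_1 c_1 = 1$, so in particular $c_1 \ne 0$. Pick $s_0 > 0$ with $s_0 c_2 + d_2 \ne 0$, which is possible since the $(2,2)$-coefficient of the same identity excludes $c_2 = d_2 = 0$. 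Setting $y_\epsilon := \mathrm{diag}(\epsilon, s_0, s_0) \in \Omega$, the upper-triangular matrix $C y_\epsilon + D$ is invertible for $\epsilon > 0$, and an explicit calculation of the $(1,1)$-entry of $g\cdot y_\epsilon = (A y_\epsilon + B)(C y_\epsilon + D)^{-1}$ gives $a_1/c_1 - 1/(c_1^2 \epsilon)$, which tends to $-\infty$ as $\epsilon \to 0^+$. This contradicts $(g\cdot y_\epsilon)_{11} > 0$, since $g\cdot y_\epsilon \in \Omega$ is positive definite. Hence $d_1 \ne 0$ and $g \in \Upsilon$.

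By \autoref{lemma:HCdecomp_Sp6} and \autoref{prop:Upsilon}, $g$ admits the unique triple decomposition $g = t_v \rho(L) \tilde{t}_{-u}$ with $v = BD^{-1} \in V$, $L = D^{-T} \in H$, and $u = D^{-1}C \in V'$; it remains to identify the signs of $v$ and $u$. For $v$: by continuity of matrix inversion at $D$, $g\cdot(\epsilon I_3) = (\epsilon A + B)(\epsilon C + D)^{-1} \to BD^{-1} = v$ as $\epsilon \to 0^+$, and each $g\cdot(\epsilon I_3) \in \Omega$ gives $v \in \overline{\Omega}$. For $u$: if $u_1 = c_1/d_1 < 0$, then $c_1 \ne 0$ and $t_0 := -d_1/c_1 > 0$, so $y_0 := t_0 I_3 \in \Omega$; but $C y_0 + D$ is upper-triangular with vanishing $(1,1)$-entry and hence singular, so $(C y_0 + D)^{-1}$ does not exist and $g\cdot y_0$ cannot be a finite element of $V$, contradicting $g\cdot y_0 \in \Omega$. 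The same argument yields $u_2 \ge 0$, so $u \in \overline{\Omega}\cap V'$ and $g \in \Gamma^+ G_0 \Gamma^-$.

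The main obstacle is precisely the invertibility of $D$, which is not transparent from the compression hypothesis $g\cdot \Omega \subset \Omega$ alone. The approach is to exploit the restrictive block-triangular structure of $G$ given by \autoref{prop:desc_G}, combined with the symplectic relation forcing $b_1 c_1 = -1$ and a one-parameter family $y_\epsilon \in \Omega$ degenerating along the first coordinate, so that a single matrix entry of $g\cdot y_\epsilon$ produces an unambiguous sign obstruction; the same matrix-entry philosophy (now identifying a singular value of $Cy + D$ inside $\Omega$) then handles the positivity of the diagonal parameter $u$.
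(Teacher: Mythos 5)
Your proposal is correct and follows essentially the same strategy as the paper: rule out $\det D = 0$ by degenerating a diagonal point of $\Omega$ along the first (or second) coordinate so that the $(1,1)$-entry of the image becomes negative, then obtain $v \in \overline{\Omega}$ by a limit as $z \to 0$ and $u_1, u_2 \ge 0$ by locating a pole of the action inside $\Omega$. The only differences are cosmetic — you compute matrix entries directly rather than via the scalar formula \autoref{eqn:zk}, and you spell out the easy inclusion $\Gamma^+ G_0 \Gamma^- \subset \Gamma$ that the paper absorbs into the remark that $\Gamma^\pm$ and $G_0$ are subsemigroups of $\Gamma$.
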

\begin{proof}
First we observe that,
 for $g = \begin{pmatrix} A & B \\ C & D \end{pmatrix} \in G$
with
\begin{eqnarray*}
 &&A = \begin{pmatrix} a_1 & & \\ 0 & a_2 & \\ a_4 & a_5 & a_3 \end{pmatrix}, \quad 
B= \begin{pmatrix} b_1 & 0 & b_6 \\ 0 & b_2 & b_7 \\ b_4 & b_5 & b_3 \end{pmatrix}, \\
&&C= \begin{pmatrix} c_1 & 0& 0 \\ 0 & c_2 & 0 \\ 0 & 0 & 0 \end{pmatrix}, \quad
D = \begin{pmatrix} d_1 & 0 & d_4 \\ & d_2 & d_5 \\ & & d_3 \end{pmatrix}, 
\end{eqnarray*}
 the equality 
 $A D^T - B C^T = I$
 implies
\begin{equation} \label{eqn:ad-bc}
 a_k d_k - b_k c_k = 1 \qquad (k=1,\,2).
\end{equation}
Moreover,
 if $z' = g \cdot z \in V_{\mathbb{C}}$ with $z \in V_{\mathbb{C}}$,
 then 
\begin{equation} \label{eqn:zk}
 z'_k = \frac{a_k z_k + b_k}{c_k z_k + d_k} \qquad(k=1,\,2).
\end{equation}
Now we suppose $g \not\in \Upsilon$,
 which means $\det D = 0$ by  \autoref{prop:Upsilon}.
Since $D^T \in H'$,
 we have $d_1= 0$ or $d_2 = 0$.
If $d_1 = 0$,
 we have $c_1 = -\frac{1}{b_1} \ne 0$ by \autoref{eqn:ad-bc}.
Let us consider the case 
$$
 z = \begin{pmatrix} x_1 & 0 & 0 \\ 0 & 1 & 0 \\ 0 & 0 & 1 \end{pmatrix} \in V.
$$
By \autoref{eqn:zk},
 we have
 $z'_1 = -a_1b_1 - \frac{b_1^2}{x_1}$,
 so that
 we can take $x_1>0$ for which $z'_1 <0$.
Then $z \in \Omega$ and $z' = g \cdot z \notin \Omega$,
 which imply that $g \not\in \Gamma$.
Similarly we can show $g \notin \Gamma$ if $d_2 = 0$.
Therefore
 we conclude that $\Gamma \subset \Upsilon$.
 
Now take $g \in \Gamma$
 and let $g = t_v \rho(A) \tilde{t}_{-u}\,\,\,(u \in V',\,\,A \in H,\,\,v \in V)$
 be a triple decomposition.
Let $\{x_n\}_{n=1}^{\infty} \subset \Omega$ be a sequence converging to $0$.
Then $\Omega \owns g \cdot x_n = v + \rho(A) \tilde{t}_{-u} x_n \to v$ as $n \to \infty$,
 so that we get $v \in \overline{\Omega}$.
Thanks to \autoref{eqn:zk},
 we have
 $z'_k = v_k + \frac{a_k^2 z_k}{z_k+u_k}\,\,\,(k=1,2)$
 for $z' = g \cdot z \in V_{\mathbb{C}}$ with $z \in V_{\mathbb{C}}$.
In particular,
 if $u_1 <0$, then $g \cdot x$ is not defined for
 $$
 x = \begin{pmatrix} -u_1 & 0 & 0 \\ 0 & 1 & 0 \\ 0 & 0 & 1 \end{pmatrix} \in \Omega,
 $$
 which contradicts $g \in \Gamma$.
Therefore $u_1 \ge 0$.
We see that $u_2 \ge 0$ similarly, 
 which completes the proof of the theorem. 
\end{proof}

As a consequence, we have
 \begin{equation}\label{semipg-trace}
 \Gamma_{\Sp}\cap G=\Gamma \;\;\text{and}\;\;  \Gamma_{\Sp}^0\cap G=\Gamma^0.
 \end{equation}

Let us describe $\Gamma$ in a matrix block form.

\begin{proposition} \label{prop:desc_Gamma}
One has
$$
 \Gamma = 
 \left\{\begin{pmatrix} A & B \\ C & D \end{pmatrix} \in G\; |\;
     \det(D) \not= 0,\,\,D^TB \in \overline{\Omega},\,\,C D^T \in \overline{\Omega} \cap V'\right\}.
$$
\end{proposition}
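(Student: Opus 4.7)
The plan is to read the block-matrix description of $\Gamma$ directly off the analogous symplectic description \autoref{symp-semi} by intersecting with $G$, using the identity $\Gamma = \Gamma_{\Sp} \cap G$ already recorded in \autoref{semipg-trace} together with the block description of $G$ provided by \autoref{prop:desc_G}. I would first record the elementary identity $\overline{\Omega} = V \cap \Sym^{+}(3,\mathbb{R})$ (hence also $\overline{\Omega} \cap V' = V' \cap \Sym^{+}(3,\mathbb{R})$): the inclusion $\subset$ is clear, and conversely any $x \in V \cap \Sym^{+}(3,\mathbb{R})$ is approximated in $V$ by $x + \varepsilon I_3 \in V \cap \Sym^{++}(3,\mathbb{R}) = \Omega$ as $\varepsilon \to 0^+$.

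For the forward inclusion, take $g = \begin{pmatrix} A & B \\ C & D \end{pmatrix} \in \Gamma$. By \autoref{thm:Gamma} we have $g \in \Upsilon$, hence $\det D \ne 0$ in view of \autoref{prop:Upsilon}. The inclusion $\Gamma \subset \Gamma_{\Sp}$ from \autoref{semipg-trace} combined with the description \autoref{symp-semi} yields $D^{T}B,\; C D^{T} \in \Sym^{+}(3,\mathbb{R})$, while $g \in G$ together with \autoref{prop:desc_G} gives $D^{T}B \in V$ and $C D^{T} \in V'$. Intersecting the two kinds of information, the preliminary identity delivers $D^{T}B \in \overline{\Omega}$ and $C D^{T} \in \overline{\Omega} \cap V'$, so $g$ lies in the right-hand side.

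For the reverse inclusion, suppose $g \in G$ satisfies the three stated conditions. The inclusion $\overline{\Omega} \subset \Sym^{+}(3,\mathbb{R})$ converts the hypotheses into $D^{T}B,\;C D^{T} \in \Sym^{+}(3,\mathbb{R})$; together with $\det D \ne 0$ this places $g$ in $\Gamma_{\Sp}$ by \autoref{symp-semi}, and invoking \autoref{semipg-trace} once more gives $g \in \Gamma_{\Sp} \cap G = \Gamma$. No serious obstacle is expected, since all the genuine work — the inclusion $\Gamma \subset \Upsilon$ and the trace formula $\Gamma = \Gamma_{\Sp} \cap G$ — has been carried out in \autoref{thm:Gamma}; the present proposition is essentially a bookkeeping translation of the symplectic condition ``$g \in \Gamma_{\Sp}$'' into the language of $\Omega$, using only that elements of $G$ automatically have $D^{T}B \in V$ and $C D^{T} \in V'$.
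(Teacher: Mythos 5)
Your argument is correct, but it takes a different route from the paper. The paper's proof stays inside the triple decomposition: writing $g = t_v\rho(L)\tilde{t}_{-u}$ and using \autoref{eqn:Luv} to get $v = D^{-T}(D^TB)D^{-1}$ and $u = D^{-1}(CD^T)D^{-T}$, it reads off the conditions $D^TB\in\overline{\Omega}$ and $CD^T\in\overline{\Omega}\cap V'$ as the congruence transforms by $D^{-T}\in H$ of the conditions $v\in\overline{\Omega}$, $u\in\overline{\Omega}\cap V'$ from \autoref{thm:Gamma}. You instead intersect the known block description \autoref{symp-semi} of $\Gamma_{\Sp}$ with the block description of $G$ from \autoref{prop:desc_G}, using the identity $\Gamma=\Gamma_{\Sp}\cap G$ of \autoref{semipg-trace} and the elementary observation $\overline{\Omega}=V\cap\Sym^+(3,\mathbb{R})$ (your $\varepsilon I_3$ approximation is the right justification, since $I_3\in V$). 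Both proofs are short and both ultimately rest on \autoref{thm:Gamma}; yours makes the proposition transparent as ``$\Gamma_{\Sp}$-conditions intersected with $G$-conditions,'' while the paper's is self-contained modulo the triple decomposition and does not route through the symplectic semigroup. One caveat worth being aware of: the equality $\Gamma_{\Sp}\cap G=\Gamma$ is only recorded in the paper as a consequence of \autoref{thm:Gamma} without a written proof, and the inclusion $\Gamma_{\Sp}\cap G\subset\Gamma$ is essentially the same computation as the proposition itself (via the triple decomposition); so your proof is formally legitimate given what the paper has stated, but it leans on an asserted identity whose verification is roughly the content of the paper's own proof.
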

\begin{proof}
Let $g = t_v \rho(L) \tilde{t}_{-u}$ be the triple 
 decomposition of $g \in \Gamma$.
Thanks to \autoref{eqn:Luv}, 
 we have $u = D^{-1}C = D^{-1} (C D^T) D^{-T}$
 and $v = B D^{-1} = D^{-T}(D^TB) D^{-1}$.
Therefore the assertion follows from \autoref{thm:Gamma}.  
\end{proof}

%
%
%
%
%
\section{The Ol'shanski\u{\i} polar decomposition of $\Gamma$}
 
We see from (\autoref{eqn:def_of_G}) that
 the Lie algebra $\mathfrak{g}$ of $G$ equals the subalgebra of $\mathfrak{sp}(6,\real)$ 
 given by
$$
 \mathfrak{g} = \set{\begin{pmatrix} A & v \\ u & -A^T \end{pmatrix}}
{A \in \mathfrak{h},\,\, v \in V,\,\, u \in V'}, 
$$
 where $\mathfrak{h}$ is the Lie algebra of $H \subset \mathrm{GL}(3, \real)$, 
 that is,
$$ \mathfrak{h} = \set{ \begin{pmatrix} a_1 & & \\ 0 & a_2 & \\ a_4 & a_5 & a_3 \end{pmatrix}}
                 {a_1, \dots, a_5 \in \real}. $$
Then $\mathfrak{g}$ is graded by $\mathrm{ad}(Z_0)$ with 
 $Z_0 := \begin{pmatrix} I/2 & 0 \\ 0 & - I/2 \end{pmatrix} \in \mathfrak{g}$.
Namely, 
 if $\mathfrak{g}_k := \set{X \in \mathfrak{g}}{[Z_0, X] = k X}$, 
 then $\mathfrak{g} = \mathfrak{g}_{-1} \oplus \mathfrak{g}_0 \oplus \mathfrak{g}_1$
 with
\begin{eqnarray*}
 \mathfrak{g}_{-1}& =& \set{\begin{pmatrix} 0 & 0 \\ u & 0 \end{pmatrix}}{u \in V'}, \\
 \mathfrak{g}_0 &=& \set{\begin{pmatrix} A & 0 \\ 0 & - A^T \end{pmatrix}}{A \in \mathfrak{h}}, \\
\mathfrak{g}_1 &=& \set{\begin{pmatrix} 0 & v \\ 0 & 0 \end{pmatrix}}{v \in V}.
\end{eqnarray*}

Let
$$C := \left\{\begin{pmatrix} 0& v \\ u &0\end{pmatrix}\; |\; 
 v\in \overline{\Omega},\,  u\in\overline{\Omega}\cap V'\right\}.$$
Then $C$ is an $Ad(G(\Omega))$-invariant closed convex cone 
 in $\frak{g}_{-1}\oplus\frak{g}_{1}$ 
 which is proper ($C\cap-C=\{0\}$) and  generating ($C^0\not=\emptyset$). 
Its interior $C^0$ is the set of 
 matrices with $v\in\Omega$ and $u\in\Omega\cap V'$.

\begin{theorem}
The compression semigroup $\Gamma$ has the following Ol'shanski\u{\i} polar decomposition
$$\Gamma=G_0 \exp(C)$$
 with $\Gamma^0=G_0\exp(C^0)$ as interior.
\end{theorem}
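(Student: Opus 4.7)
The plan is to reduce to the already-known Ol'shanski\u{\i} decomposition $\Gamma_{\Sp} = G(3, \real) \exp(C_{\Sp})$ of the symplectic semigroup and to exploit the identification $\Gamma = \Gamma_{\Sp} \cap G$ of (\ref{semipg-trace}). The forward inclusion $G_0 \exp(C) \subset \Gamma$ is short: since $G_0 = \rho(H)$ consists of linear automorphisms of $\Omega$, one has $G_0 \subset \Gamma$; and since $C \subset \mathfrak{g} \cap C_{\Sp}$, one gets $\exp(C) \subset G \cap \exp(C_{\Sp}) \subset G \cap \Gamma_{\Sp} = \Gamma$. The semigroup property of $\Gamma$ then closes the product.

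For the reverse inclusion, I would pick $g = \begin{pmatrix} A & B \\ C & D \end{pmatrix} \in \Gamma \subset \Gamma_{\Sp}$ and write $g = \kappa \exp(X)$ uniquely with $\kappa = \begin{pmatrix} L & 0 \\ 0 & L^{-T} \end{pmatrix} \in G(3, \real)$ and $X = \begin{pmatrix} 0 & B_0 \\ C_0 & 0 \end{pmatrix} \in C_{\Sp}$, and then show $\kappa \in G_0$ and $X \in C$. A direct power-series computation yields $\exp(X) = \begin{pmatrix} P & Q \\ R & P^T \end{pmatrix}$ with $P = \cosh(\sqrt{B_0 C_0})$, $Q = f(B_0 C_0) B_0$, $R = C_0 f(B_0 C_0)$, where $f(M) = \sum_{n \ge 0} M^n/(2n+1)!$; block-wise equality then gives $A = LP$, $D^T = PL^{-1}$, so $P^2 = D^T A$. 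To place $\kappa$ in $G_0$, I first invoke \autoref{prop:desc_G}: $g \in G$ forces $A, D^T \in H'$, so $P^2 = D^T A \in H'$ by (\ref{eqn:H-H}). Since the eigenvalues of $P$ are $\ge 1$ (being $\cosh$ of nonnegative reals), those of $P^2$ are positive, and an elementary calculation shows that the principal square root of a matrix in $H'$ with positive eigenvalues again lies in $H'$. Hence $P \in H'$, so $L = A P^{-1} \in H' \cap \GL(3, \real) = H$, and $\kappa \in G_0$.

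With $\kappa \in G_0 \subset G$, one has $\exp(X) = \kappa^{-1} g \in G$, forcing $R \in V'$ and $Q \in W$ via \autoref{prop:desc_G}. The remaining task is to conclude $B_0 \in V$ and $C_0 \in V'$. From $R e_3 = 0$ and the invertibility of $f(B_0 C_0)$ (eigenvalues $\ge 1$), a short chase first yields $f(B_0 C_0) e_3 = e_3$, hence $C_0 e_3 = 0$; by symmetry of $C_0$ the problem reduces to the top-left $2\times 2$ blocks $C_0'$ and $B_0'$ of $C_0$ and $B_0$. Setting $F := f(B_0' C_0')$, the constraints $R \in V'$ and $Q \in W$ translate to: both $F B_0'$ and $C_0' F$ are diagonal. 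Using symmetry of $B_0', C_0'$, these become the intertwining relations $F^T = D_1^{-1} F D_1 = D_2 F D_2^{-1}$ with $D_1 = F B_0'$, $D_2 = C_0' F$, so $F$ commutes with the diagonal matrix $D_1 D_2$; moreover $F$ is diagonalizable since $B_0' C_0'$ is similar to $(C_0')^{1/2} B_0' (C_0')^{1/2} \ge 0$. In the generic case (distinct entries of $D_1 D_2$), this forces $F$ to be diagonal, and then $B_0' = F^{-1} D_1$ and $C_0' = D_2 F^{-1}$ are diagonal too; the degenerate case (when $D_1 D_2$ is scalar) is handled by combining diagonalizability of $F$ with the self-consistency equation $F = f(\text{const} \cdot F^{-2})$ arising from $M' = F^{-2} D_1 D_2$, which again yields $F$ scalar. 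Thus $B_0 \in V$ and $C_0 \in V'$, which together with $B_0, C_0 \in \Sym^+(3, \real)$ gives $X \in C$ and $g \in G_0 \exp(C)$.

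The main technical obstacle will be precisely this diagonality argument: disentangling individual diagonality of $B_0'$ and $C_0'$ from the coupled constraints $F B_0', C_0' F \in \mathrm{diag}$, given that $F$ is tied to $B_0' C_0'$ through a transcendental power series. The commutation trick sketched above settles the generic case cleanly, while the degenerate configurations are technically routine but require separate bookkeeping. Finally, for the interior statement $\Gamma^0 = G_0 \exp(C^0)$, I would observe that $(k, X) \mapsto k \exp(X)$ is a homeomorphism $G_0 \times C \to \Gamma$ (inherited from the corresponding symplectic Ol'shanski\u{\i} homeomorphism $G(3, \real) \times C_{\Sp} \to \Gamma_{\Sp}$); since the extension $G_0 \times (\mathfrak{g}_{-1} \oplus \mathfrak{g}_1) \to G$ is a local diffeomorphism, the open subset $G_0 \times C^0$ maps to an open subset of $G$ contained in $\Gamma$, hence in $\Gamma^0$, and the reverse inclusion follows from bijectivity of the homeomorphism.
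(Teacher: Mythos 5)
Your forward inclusion $G_0\exp(C)\subset\Gamma$ is essentially the paper's: both arguments come down to $G_0\exp(C)\subset G\cap\Gamma_{\Sp}=\Gamma$, using \autoref{semipg-trace}. For the reverse inclusion you take a genuinely different and much heavier route. The paper never computes $\exp(X)$ explicitly: it invokes the general Ol'shanski\u{\i}--Lawson theory to assert that $S=G_0\exp(C)$ is a closed sub\emph{semigroup} of $G$ (this is where the $\Ad(G_0)$-invariance of the closed convex cone $C$ enters), notes that $\Gamma^+=\exp(\overline{\Omega})$, $\Gamma^-=\exp(\overline{\Omega}\cap V')$ and $G_0$ all lie in $S$, and then concludes $\Gamma=\Gamma^+G_0\Gamma^-\subset S$ immediately from the triple decomposition of \autoref{thm:Gamma}. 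That is the whole point of Section 3, and your proposal does not exploit it; instead you re-derive the memberships $\kappa\in G_0$ and $X\in C$ from scratch by block-matching inside the symplectic polar decomposition. Uniqueness of that decomposition guarantees your target statements are true, so the plan is workable in principle, but it replaces a two-line argument by substantial matrix analysis.

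Within your sketch there are also concrete soft spots. From $Re_3=0$ and invertibility of $f(B_0C_0)$ alone you cannot conclude $f(B_0C_0)e_3=e_3$; the correct short chase uses the symmetry $R=R^T=f(B_0C_0)^TC_0$, so that $R e_3=0$ and invertibility of $f(B_0C_0)^T$ give $C_0e_3=0$ directly (alternatively one can use $P\in H'$ to see that $e_3$ is an eigenvector of $B_0C_0$). The intertwining relation $F^T=D_1^{-1}FD_1$ presupposes that $D_1=FB_0'$ is invertible, i.e.\ that $B_0'$ is, which need not hold for $B_0\in\Sym^+(3,\mathbb{R})$; only the commutation $[F,D_1D_2]=0$ is unconditional, and your final argument should be rebuilt on that alone. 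Finally the degenerate case ($D_1D_2$ scalar) is only gestured at. None of these is fatal, but they represent genuine remaining work, all of which the paper's route through \autoref{thm:Gamma} makes unnecessary.
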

\begin{proof} 
Let us denote $S := G_0\exp(C)$ and prove that $\Gamma=S$.

First by \cite{Olsh82}, \cite{Law94} 
 it follows that $S$ is a closed subsemigroup of $G$. 
Further,
 it is clear that $\Gamma^+=\exp(\bar{\Omega}) $, 
 $\Gamma^-=\exp(\bar{\Omega}\cap V')$ and $G_0$ 
 are closed subsemigroups of $S$. 
Thus we have $\Gamma\subset S$ by \autoref{thm:Gamma}. 

On the other hand, 
 since $G_0$ and $\exp(C)$ are subsemigroups of $\Gamma_{\mathrm{Sp}}$, 
 we see that $G_0\exp(C)\subset \Gamma_{\mathrm{Sp}}$. 
In addition, $G_0\subset G$ and $\exp(C)\subset G$, 
so that $S=G_0\exp(C)$ is contained in both $G$ and $\Gamma_{\Sp}$.
Therefore  $S \subset \Gamma$ thanks to \autoref{semipg-trace}.
\end{proof}

\section{A counter-example to the contraction property of $\Gamma$}
On a proper open convex cone $\Cone \subset \real^n$, 
 the second derivative of the logarithm of the K\"ocher-Vinberg characteristic function 
 $\varphi_{\Cone}$ of $\Cone$ gives a canonical Riemannian metric
 (see \cite[Section I.4]{F-K}, \cite[Chapter I, Section 3]{Vin63}):
 $$ (v|v')_x := D_v D_{v'} \log \varphi_{\Cone}(x) \qquad (v,v' \in \real^n,\,\, x \in \Cone),$$
 where $D_v$ denotes the directional derivative in $v$.
Thanks to the relative invariance of $\varphi_{\Cone}$ 
 under the action of the linear automorphism group $G(\Cone)$
 (see (\autoref{eqn:phi-invariance})),
 the canonical metric is $G(\Cone)$-invariant.
In particular, 
 if $\Cone$ is a symmetric cone, 
 the metric makes $\Cone$  a Riemannian symmetric space. 
For example, 
 if $\Cone = \mathrm{Sym}^{++}(3, \mathbb{R})$,
 then the metric is given by the formula
\begin{equation} \label{eqn;metric_on_Sym3}
 (v|v')_x = 2 \mathrm{tr}\,(x^{-1} v x^{-1} v') \qquad 
 (v,v' \in \mathrm{Sym}(3, \mathbb{R}),\, x \in \mathrm{Sym}^{++}(3, \real)). 
\end{equation}  
It is proved in \cite[Section 5]{Kou95} that,
 if $\Cone$ is symmetric,  
 the compression semigroup $\Gamma_{\Cone}$ of $\Cone$ has the contraction property
 with respect to the canonical metric, that is,
$$
 (J(g,x)v| J(g,x)v)_{g(x)} \le (v|v)_x \qquad (g \in \Gamma_{\Cone}, x \in \Cone,\,\, v \in \real^n), 
$$
 where $J(g,x)$ stands for the Jacobi matrix of $g$ at $x$.
We shall see that it is no longer the case when $\Cone$ is the dual Vinberg cone $\Omega$.

Recalling \autoref{eqn:phi-det},
 we see that the canonical Riemannian metric on $\Omega$ is given by
\begin{equation} \label{eqn:metric_on_Omega}
(v|v')_x = -\frac{1}{2} \Bigl( \frac{v_1 v'_1}{x_1^2} + \frac{v_2 v'_2}{x_2^2}\Bigr) 
+ 2\, \mathrm{tr}\,(x^{-1} v x^{-1} v' )
 \qquad (v,v' \in V,\,\, x \in \Omega).
\end{equation}
Now we consider the case $v = v' = \begin{pmatrix} 1 & 0 & 1 \\ 0 & 0 &  0 \\ 1 & 0 & 1 \end{pmatrix}$ and $x = I_3$. \\
Then 
$ (v|v)_x = -\frac{1}{2} + 2 \times 4 = 7.5$. 
In view of \autoref{thm:Gamma}, 
 we put $g_0 := t_{v_0} \in \Gamma$ with 
 $v_0 := \begin{pmatrix} 1 & 0 & -1 \\ 0 & 1 & 0 \\ -1 & 0 & 1.01 \end{pmatrix} \in \Omega$.
Then 
 $g_0(x) = I_n + v_0 = \begin{pmatrix} 2 & 0 & -1 \\ 0 & 2 & 0 \\ -1 & 0 & 2.01 \end{pmatrix}
 \in \Omega$ and
 $J(g_0, x)v = v$ since $g_0$ is a translation.
We observe
$$
 (J(g_0,x)v| J(g_0, x)v)_{g_0(x)} = -\frac{1}{8} + 2 \Bigl( \frac{6.01}{3.02} \Bigr)^2 
= 7.795\cdots > 7.5 = (v|v)_x. 
$$ 
This phenomenon is caused by a behavior of 
 `the extra term' $-\frac{1}{2}(\frac{v_1 v'_1}{x_1^2} + \frac{v_2 v'_2}{x_2^2})$
 in \autoref{eqn:metric_on_Omega}, 
 compared with \autoref{eqn;metric_on_Sym3}.
Actually, the decrease of the second term $2\,\mathrm{tr} (x^{-1} v x^{-1} v')$ is little
 (from $8$ to $2 \Bigl( \frac{6.01}{3.02} \Bigr)^2 =  7.920\cdots$), 
 while the extra term increases from $-1/2$ to $-1/8$.

\end{document}